\newtheorem{theorem}{Theorem}
\newtheorem{proposition}[theorem]{Proposition}
\theoremstyle{definition}
\newtheorem{definition}[theorem]{Definition}
\newtheorem{example}[theorem]{Example}
\newtheorem{remark}[theorem]{Remark}
\newcommand{\CC}{\mathbb{C}}
\newcommand{\IC}{\mathbb{IC}}
\newcommand{\IR}{\mathbb{IR}}
\newcommand{\KK}{\mathbb{K}}
\newcommand{\LL}{\mathbb{L}}
\newcommand{\NN}{\mathbb{N}}
\newcommand{\PP}{\mathbb{P}}
\newcommand{\RR}{\mathbb{R}}
\newcommand{\ZZ}{\mathbb{Z}}
\newcommand{\calA}{\mathcal{A}}
\newcommand{\calC}{\mathcal{C}}
\newcommand{\bfalpha}{{\boldsymbol{\alpha}}}
\newcommand{\bfgamma}{{\boldsymbol{\gamma}}}
\newcommand{\bfa}{\mathbf{a}}
\newcommand{\bfc}{\mathbf{c}}
\newcommand{\bfI}{\mathbf{I}}
\newcommand{\bfp}{\mathbf{p}}
\newcommand{\bfu}{\mathbf{u}}
\newcommand{\bfv}{\mathbf{v}}
\newcommand{\bfx}{\mathbf{x}}
\newcommand{\bfy}{\mathbf{y}}
\newcommand{\bfz}{\mathbf{z}}
\newcommand{\vol}{{\mathrm{vol}}}
\newcommand{\conv}{{\mathrm{conv}}}
\newcommand{\codim}{{\mathrm{codim}}}
\newcommand{\MV}{{\mathrm{MV}}}
\def\calP{\mathcal{P}}
\def\calM{\mathcal{M}}
\def\SE{\mathrm{SE}}
\def\SO{\mathrm{SO}}
\definecolor{myblue}{RGB}{0,0,204}
\definecolor{myred}{RGB}{204,0,0}
\definecolor{mymaroon}{RGB}{125,0,0}
\newcommand{\defcolor}[1]{{\color{blue}#1}}
\newcommand{\demph}[1]{\defcolor{{\sl #1}}}
\title{Numerical Nonlinear Algebra}
\author[Bates]{Daniel J.~Bates}
\address{Daniel J.~Bates, Department of Mathematics, U.S. Naval Academy, 572C Holloway Road, Mail Stop 9E, Annapolis, MD 21402\\ USA}
\email{dbates@usna.edu}
\author[Breiding]{Paul Breiding}
\address{Paul Breiding, Universit\"at Osnabr\"uck, FB Mathematik/Informatik,
Albrechtstr.\ 28a, 49076 Osnabrück,  Germany}
\email{pbreiding@uni-osnabrueck.de}
\urladdr{http://www.paulbreiding.org}
\author[Chen]{Tianran Chen}
\address{Tianran Chen\\
    Department of Mathematics\\
    Auburn University at Montgomery\\
    Montgomery\\
    Alabama 36116\\ USA}
\email{ti@nranchen.org}
\urladdr{http://www.tianranchen.org}
\author[Hauenstein]{Jonathan~D. Hauenstein}
\address{Jonathan D. Hauenstein\\
Department of Applied \& Computational Mathematics \& Statistics\\
         University of Notre Dame\\
         Notre Dame, IN  46556\\
         USA}
\email{hauenstein@nd.edu}
\urladdr{{http://www.nd.edu/~jhauenst}}
\author[Leykin]{Anton Leykin}
\address{Anton Leykin\\
         School of Mathematics\\
         Georgia Institute of Technology\\
         686 Cherry Street\\
         Atlanta, GA 30332-0160\\
         USA}
\email{leykin@math.gatech.edu}
\author[Sottile]{Frank Sottile}
\address{Frank Sottile\\
         Department of Mathematics\\
         Texas A\&M University\\
         College Station\\
         Texas \ 77843\\
         USA}
\email{sottile@tamu.edu}
\urladdr{https://franksottile.github.io/}
\begin{document}

\begin{abstract}
Numerical nonlinear algebra is a computational paradigm that uses numerical analysis
to study polynomial equations.  Its origins were methods to solve systems of
polynomial equations based on the classical theorem of B\'ezout.  This was decisively
linked to modern developments in algebraic geometry  by the polyhedral homotopy
algorithm of Huber and Sturmfels, which exploits the combinatorial structure of the
equations and led to efficient software for solving polynomial equations.

Subsequent growth of numerical nonlinear algebra continues to be informed by algebraic geometry and its
applications.  These include new approaches to solving, algorithms for studying
positive-dimensional varieties, certification, and a range of applications
both within mathematics and from other disciplines.
With new implementations, numerical nonlinear algebra
is now a fundamental computational tool for algebraic geometry and its
applications.
We survey  some of these innovations and some recent applications.
\end{abstract}

\maketitle

\dedicatory{In honor of Bernd Sturmfels on his 60th birthday}

%
\bigskip
\section{Introduction}\label{sec:intro}
Bernd Sturmfels has a knack for neologisms, minting memorable mathematical terms that pithily portray their essence
and pass into general use.
Nonlinear algebra~\cite{NLA_book} is a Sturmfelsian neologism expressing the focus on computation in applications of algebraic geometry, the
objects that appear in applications, and the theoretical underpinnings this inquiry requires.
Numerical nonlinear algebra is numerical computation supporting nonlinear algebra.
It is complementary to symbolic computation (also a key input to nonlinear algebra), and its development has opened up new
vistas to explore and challenges to overcome.

Sturmfels did not create this field, but his work with Huber introducing the polyhedral homotopy algorithm~\cite{HuSt95} catalyzed it.
This algorithm exemplifies Sturmfels' mathematical contributions, exploiting links between algebra and geometric combinatorics
to address problems in other areas of mathematics, in this case the ancient problem of solving equations.
He was also important for its development with his encouragement of researchers, early decisive use of its
methods~\cite{Sturmfels2002,BHORS}, and by popularizing it~\cite{3264}.

We survey some of the main tools in this field, focusing on homotopy continuation.
Our goal is to provide a reasonably comprehensive introduction to some of the main ideas in numerical nonlinear algebra. 
In Section~\ref{S:phc} we describe polynomial homotopy continuation and its basic use to solve systems of polynomial equations.
We develop the background and present some details of the polyhedral homotopy algorithm in Section~\ref{polyhedral-homotopy}.
Numerical algebraic geometry, which uses these tools to represent
and manipulate 
algebraic varieties on a computer, is presented in Section~\ref{sec:nag},
along with new methods for solving equations that this perspective affords.
A welcome and perhaps surprising feature is that there  are often methods to certify the approximate solutions these algorithms provide,
which is sketched in Section~\ref{sec:cert}.
We close this survey by presenting in Section~\ref{sec:apps} three domains in which numerical nonlinear algebra has recently been applied.

%
\section{What is polynomial homotopy continuation?}\label{S:phc}
\demph{Polynomial homotopy continuation} is a numerical method for computing complex solutions to systems of polynomial equations,
 \begin{equation}\label{system}
   F(\textbf{x})\ =\ F(x_1,\ldots,x_n)\ =\ \begin{bmatrix}
   \ f_1(x_1,\ldots,x_n)\ \\
   \vdots \\
   \ f_m(x_1,\ldots,x_n)\
   \end{bmatrix}\ =\ \textbf{0}\,,
 \end{equation}
where $f_i(x_1,\ldots,x_n)\in \CC[x_1,\ldots,x_n]$ for $1\leq i\leq m$.
A point $\bfz\in\CC^n$ is a \demph{zero}, or a \demph{solution}, of $F$ if $F(\bfz) = 0$.
A solution is \demph{regular}  if the Jacobian matrix of the partial derivatives of~$F$ evaluated at $\bfz$ has rank $n$.
Necessarily, $m\geq n$.
When $m=n$, we refer to the system as \demph{square}.
Otherwise, the system is \demph{overdetermined}.

The underlying idea of polynomial homotopy continuation is simple:
to solve $F(\bfx)=0$, we construct another system $G(\bfx)=0$ of polynomial equations with known zeros,
together with a \demph{homotopy}.
This is a family  of polynomial systems $H(\bfx,t)$ that depend algebraically on $t\in\CC$
and which interpolates between $F$ and $G$ in that $H(\bfx,0)=F(\bfx)$ and $H(\bfx,1)=G(\bfx)$.
Considering one zero, $\bfy$, of $G$ and restricting to $t\in[0,1]$, the equation $H(\bfx,t)=0$ defines a \demph{solution path}
$\bfx(t)\subset\CC^n$ such that $H(\bfx(t),t)=0$ for $t\in[0,1]$ and $\bfx(1)=\bfy$.
The path is followed (see Section~\ref{sec:tracking}) from $t=1$ to $t=0$ to compute an approximation of the solution $\bfz=\bfx(0)$ to $F$.
Following this solution path is equivalent to solving the initial value problem
%
 \begin{equation}
    \label{Davidenko}
    \frac{\partial}{\partial \bfx} H(\bfx,t) \, \Big(\, \frac{\mathrm d}{\mathrm d t}\bfx(t)\, \Big)
     + \frac{\partial}{\partial t} H(\bfx,t)\ =\ 0\,,\qquad \bfx(1)\ =\ \bfy\,.
 \end{equation}

This \demph{Davidenko differential equation}~\cite{Davidenko,Davidenko_full} is typically solved using a 
predictor-corrector scheme (see Section~\ref{sec:tracking}).
We say that $\bfx(1)=\bfy$ gets \demph{tracked} towards $\bfx(0)$.
For this predictor-corrector scheme to successfully compute $\bfx(0)$, $\bfx(t)$ must be a regular zero of $H(\bfx,t)=0$ for every
$t\in(0,1]$. 
Nonregular solutions at $t=0$ are treated using specialized numerical methods called
\demph{endgames}~\cite{Morgan:Sommese:Wampler:1992a}.

So far, there is nothing specific about polynomials.
When $F$ is a system of polynomials, we can construct a \demph{start system} $G$ with known zeros and a homotopy $H$
such that for every isolated zero $\bfz$ of~$F$, there is at least one zero of $G$ that can be tracked towards~$\bfz$.
That is, we can compute approximations of all isolated zeros of $F$.

The ideas behind this method were developed over many years, and by a number of people.
Garc\'ia and Zangwill~\cite{GZ79} proposed polynomial homotopy continuation and a classic reference is Morgan's book~\cite{Morgan}.
The textbook by Sommese and Wampler~\cite{Sommese:Wampler:2005} is now a standard reference.
Decades of conceptual advances have spawned several implementations of the core algorithms.
Historically, the first implementation with wide use was \texttt{PHCpack} \cite{PHCpack}, followed a decade later by
\texttt{Bertini} \cite{Bertini}, which is also widely used.
Later came the \texttt{HOM4PS} family \cite{HOM4PSArticle,LeeLiTsai2008HOM4PS},
\texttt{NAG4M2}~\cite{NumericalAlgebraicGeometryArticle}, and \texttt{HomotopyContinuation.jl}~\cite{HC.jl}.
The package \texttt{NumericalAlgebraicGeometry} of \texttt{Macaulay2}~\cite{M2} (referred to as
\texttt{NAG4M2}) includes interfaces to many of the alternatives listed above, for example, see~\cite{NAG4M2_PHCpack}.

We next explain polynomial homotopy continuation with more care and in more detail.
We first discuss what is meant by numerical method and approximate zero of a system.

\subsection{The solution to a system of polynomial equations}

A solution to the system~\eqref{system} is a point $\bfz\in\CC^n$ satisfying~\eqref{system}.
The collection of all such points is an \demph{algebraic variety},
\begin{equation}\label{eq:AlgVariety}
    V\ \vcentcolon=\ \{\bfz\in\CC^n \mid f_1(\bfz)=\cdots = f_m(\bfz)=0\}\,.
\end{equation}
This defines solutions $\bfz$ \demph{implicitly}, using just the definition of $F$.
It is hard to extract any information other than ``$\bfz$ is a solution to $F$'' from this representation.
A more useful representation of $V$ is given by a \demph{Gr\"obner basis} \cite{CLO, Sturmfels2002, Sturmfels2005}.

Consider a simple example of two polynomial equations in two variables,
 \begin{equation}\label{Eq:Ex2.1}
    x^2+y^2-1\ =\ 0, \ x^2-y^3-y-1\ =\ 0\,,
 \end{equation}
describing the intersection of two plane curves.
A Gr\"obner basis is $\{y^3+y^2+y,\ x^2+y^2-1\}$.
Its triangularity facilitates solving.
The first equation, $y^3+y^2+y=0$, has the three solutions $0, (-1\pm \sqrt{-3})/2$.
Substituting each into the second gives two solutions, for six solutions altogether.
While these equations can be solved exactly, one cannot do this in general.  A Gr\"obner basis is an equivalent
implicit representation of $V$ from which we may transparently  extract numerical invariants such as the number of
solutions or the dimension and degree of $V$.
Finer questions about individual solutions may require computing them numerically.

Numerical methods only compute numerical approximations of solutions to a system~\eqref{system}.
Thus  $(1.271+ .341\sqrt{-1}\,,\,-.500+.866\sqrt{-1})$ is an approximation of a solution to~\eqref{Eq:Ex2.1}.
A numerical approximation of a point $\bfz\in V$ is any point $\bfy\in\CC^n$ which is in some sense close to~$\bfz$.
For example, we could require that $\bfy$ is within some tolerance $\epsilon>0$ of $\bfz$, i.e., $|\bfy-\bfz|<\epsilon$.
Consequently, the concept of zero of (or solution to) a polynomial system is replaced by an \demph{approximate zero} (defined in
Section~\ref{sec:tracking}). 
This is fundamentally different than using exact methods like Gr\"obner bases, where the goal is to
work with a representation of the true exact zeros of polynomial systems.
As an approximate zero is not a true zero, a numerical computation does not yield all the information obtained in an exact
computation.
On the other hand, a numerical computation is often less costly than a symbolic computation.
Other advantages are  that the architecture of modern computers is optimized for floating point arithmetic and that numerical
continuation is readily parallelized (see Remark~\ref{R:parallel}).
Another advantage is that approximate zeroes are easily refined using Newton's Method (Section~\ref{sec:tracking}).

Despite not containing all the information of true zeros, in Section~\ref{sec:cert} we discuss how to use approximate zeros to obtain
precise and provable results.

%
\subsection{The Parameter Continuation Theorem}\label{SS:parameterContinuation}
Our discussion of polynomial homotopy continuation assumed that solution paths exist.
The \demph{Parameter Continuation Theorem} by Morgan and Sommese~\cite{MS1989} asserts this when the homotopy arises from
a path in \demph{parameter} space.

Suppose that the system of polynomials~\eqref{system} depends on parameters $\bfp=(p_1,\ldots,p_k)\in\CC^k$.
Write $F(\bfx;\bfp)$ for the 
polynomial system corresponding to a particular choice of $\bfp$, and further suppose that the map $\bfp\mapsto F(\bfx;\bfp)$ is algebraic
in that the coefficients of monomials in $F$ are polynomial or rational functions of $\bfp$.
For example, the parameters may be the coefficients in $F$.
Consider the incidence variety
 \begin{equation}\label{Eq:Incidence_I}
    Z\ \vcentcolon=\ \{ (\bfx, \bfp) \in \CC^n \times \CC^k \mid F(\bfx;\bfp) = 0\}\ \subseteq\ \CC^n \times \CC^k\,.
 \end{equation}
Let $\pi_1\colon Z \rightarrow \CC^n$ and  $\pi_2\colon Z \rightarrow \CC^k$ be the projections onto the first and second
factors.
The map $\pi_1$ identifies points in the fiber $\pi_2^{-1}(\bfp)$ with solutions to $F(\bfx;\bfp)=0$.
Standard arguments in algebraic geometry imply the existence of a dense open set $U\subset\CC^k$ and a number $d$ such that for all $\bfp\in U$, the fiber $\pi_2^{-1}(\bfp)$ has dimension $d$. 
Furthermore, when this typical dimension $d$ is zero, there is a number $N$ such that for $\bfp\in U$, then number of regular solutions in
the fiber $\pi_2^{-1}(\bfp)$ is $N$.


\begin{theorem}[Parameter Continuation  Theorem]\label{parameter_theorem}
 With these definitions and when $d=0$, suppose that  $\gamma(t)\colon [0, 1] \rightarrow \CC^k$ is a continuous path.
\begin{enumerate}[label=(\alph*)]
   \item If $\gamma([0, 1])\subset  U$, then the homotopy $H(\bfx,t)=F(\bfx; \gamma(t))$  defines $N$ continuous and isolated solution
     paths $\bfx(t)$.

   \item \label{param_thm_b}
      If $\gamma((0, 1])\subset  U$, then as $t \rightarrow 0$, the limits of the solution paths, if they exist, include all the isolated
     solutions to $F(\bfx;\gamma(0))=0$.
     This includes both regular solutions and solutions with multiplicity greater than one.
\end{enumerate}
At points $t\in[0,1]$ with $\gamma(t)\in U$ where $\gamma$ is differentiable, $\bfx(t)$ is differentiable.
\end{theorem}
A short proof for this theorem using Gr\"obner bases can be found in \cite{BB23}.

Strictly speaking, for differentiability at the endpoints, we want $\gamma$ to be defined on some neighborhood of $[0,1]\subset\RR$.
The  point of the theorem is that any path satisfying $\gamma((0,1])\subset U$ can be used for polynomial homotopy continuation, with
start system $G(\bfx)=F(\bfx;\gamma(1))$.
Since the \demph{branch locus} $B \vcentcolon= \CC^k{\smallsetminus}U$ is a subvariety, it has real codimension at least two and typical
paths in the parameter space $\CC^k$ do not meet $B$.
When the typical fiber dimension $d$ is zero, we call $\pi_2\colon Z\to\CC^k$ a \demph{branched cover}.
Theorem~\ref{parameter_theorem} can be generalized, replacing the parameter space $\CC^k$ by an irreducible
variety~\cite[Theorem 7.1.4]{Sommese:Wampler:2005}.
Similarly, polynomials in~$F$ may be replaced by Laurent polynomials, as we shall encounter in Section~\ref{polyhedral-homotopy}.
A \demph{parameter homotopy} is one arising from a path $\gamma$ such as in Theorem~\ref{parameter_theorem}\ref{param_thm_b}.

The Parameter Continuation Theorem follows from Bertini's Theorem, other standard results in algebraic
geometry, and the Implicit Function Theorem.
A proof is given in~\cite{MS1989}.

\begin{example}\label{ex:paths}
 Figure~\ref{F:paths} shows some of the possibilities for homotopy paths $\bfx(t)$, when
 Theorem~\ref{parameter_theorem}\ref{param_thm_b} holds. 
 \begin{figure}[htb]

  \centering
  \begin{picture}(200,120)(0,-5)
    \put(0,0){\includegraphics{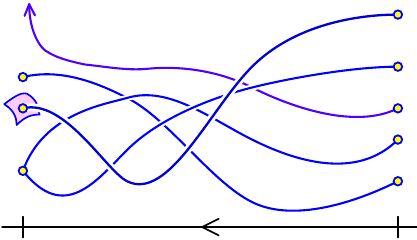}}
    \put(8,-10){$0$}   \put(98,-10){$t$}  \put(190,-10){$1$}
  \end{picture}

 \caption{Homotopy Paths.}\label{F:paths}
 \end{figure}
 The start system $G(\bfx)$ at $t=1$ has $N=5$ regular zeros, and each lies on a unique path $\bfx(t)$ for $t\in(0,1]$.
 One path has no finite limit as $t\to 0$, while the other four have limits.
 Two paths have the same limit, and their common endpoint is an isolated zero of~$F(\bfx)$ of multiplicity
 two.
 The endpoint of another path is not an isolated zero of $F(\bfx)$, indicated by a small pink surface.
 The endpoint of the other path is the regular zero of the target system $F(\bfx)$.  \hfill$\diamond$
\end{example}

\subsection{The total degree homotopy}\label{sec:totalDegree}

To reach {\em all} 
isolated zeros of the target system,
the start system must have at least as many zeros as the target system.
Thus, an upper bound on the number of isolated zeros is often needed to choose a homotopy.
One such upper bound is provided by B\'{e}zout's Theorem:
The number of isolated zeros of the system $F = (f_1,\ldots,f_n)$ is at most $d_1 d_2 \cdots d_n$,
where $d_i = \deg f_i$ for $i=1,\ldots,n$.
It inspires start systems of the form
 \begin{equation}\label{Eq:BezoutStart}
   \begin{bmatrix}
        \ b_0 x_1^{d_1}\ -\ b_1\ \\
        \vdots \\
        \ b_0 x_n^{d_n}\ -\ b_n\
    \end{bmatrix}\ .
 \end{equation}
For nonzero complex numbers $b_0,b_1,\ldots,b_n \in \CC^\times=\CC{\smallsetminus}\{0\}$, this start system is outside the branch locus $B$
and it has $d = d_1 d_2 \cdots d_n$ solutions, which are all are easily computed. 
This gives the \demph{total degree homotopy},
 \begin{equation}\label{E:TDH}
    H(x_1,\ldots,x_n,t) \vcentcolon=
    t \cdot
    \begin{bmatrix}
        \ b_0 x_1^{d_1}\ -\ b_1\ \\
        \vdots \\
        \ b_0 x_n^{d_n}\ -\ b_n\
    \end{bmatrix}
    \ +\
    (1-t) \cdot
    \begin{bmatrix}
        \ f_1(x_1,\ldots,x_n)\ \\
        \vdots \\
        \ f_n(x_1,\ldots,x_n)\
    \end{bmatrix}\ .
\end{equation}
Such a convex combination of two similar systems is called a \demph{straight-line homotopy}, which
is a particular case of a parameter homotopy.
For general choices of the parameters $b_i$ the smoothness conditions of Theorem~\ref{parameter_theorem}\ref{param_thm_b}
hold~\cite[Thm.~8.4.1]{Sommese:Wampler:2005}.

\begin{example}\label{Ex:TD_simpleSystem}
  The total degree homotopy for the system~\eqref{Eq:Ex2.1} has the form
    \begin{equation}\label{equ: total degree homotopy example}
        H(x,y,t) \vcentcolon=
        t\cdot
        \begin{bmatrix}
            \ b_0 x^{2} - b_1\ \\
            \ b_0 y^{3} - b_2\
        \end{bmatrix}
        +
        (1-t)\cdot
        \begin{bmatrix}
            \ x^2 + y^2 - 1\  \\
            \ x^2 - y^3 - y -1\
        \end{bmatrix}
        .
    \end{equation}
    For $b_0,b_1,b_2 \in \CC^\times$,  the start system
    $H(x,y,1) = (b_0 x^2 - b_1, b_0 y^3 - b_2)$ has six distinct complex zeros, all of which are regular,
    and the zero set of $H(x,y,t)$ consists of six paths
    in $\CC^2 \times [0,1]$, each smoothly parameterized by $t$ for {\em almost all} choices of $b_0,b_1,b_2$.
    The parameter $b_0$  is used to avoid cancellation of  the highest degree terms for all $t\in(0,1]$.\hfill$\diamond$
\end{example}

The phrase ``almost all'' in this example is because the set of choices of $b_i$ for which the paths are singular
lies in the complement of the dense open subset $U$ of Theorem~\ref{parameter_theorem}, and therefore 
has measure zero.
Such situations occur frequently in this field, often referred to as \demph{probability one}.

\hypertarget{path-tracking}{%
\subsection{Path-tracking}\label{sec:tracking}}

Path-tracking is the numerical core of polynomial homotopy continuation.

Suppose that $H(\bfx,t)$ for $\bfx\in\CC^n$ and $t\in\CC$ is a homotopy with target system
$F(\bfx)=H(\bfx,0)$ and start system $G(\bfx)=H(\bfx,1)$.
Further suppose that for $t\in (0,1]$, $H(\bfx,t)=0$ defines smooth paths $\bfx(t)\colon(0,1]\to\CC^n$ such that each
isolated solution to $F$ is connected to at least one regular solution to $G$ through some path, as in
Theorem~\ref{parameter_theorem}\ref{param_thm_b}. 
By the Implicit Function Theorem,  each isolated solution to $G$ is the endpoint~$\bfx(1)$ of a unique path $\bfx(t)$.
Lastly, we assume that all regular solutions to $G$ are known.

Given this, the isolated solutions to $F$ may be computed as follows:
For each regular solution $\bfx(1)$ to $G$, track the path $\bfx(t)$ from $t=1$ towards $t=0$.
If it converges, then $\bfx(0)=\lim_{t\to 0} \bfx(t)$ satisfies $F(\bfx(0))= 0$, and this will find all isolated solutions to $F$.
%
%

The path $\bfx(t)$ satisfies the Davidenko differential equation, and thus we may compute values $\bfx(t)$ by solving
the initial value problem~\eqref{Davidenko}.
Consequently, we may use any numerical scheme for solving initial value problems.
This is not satisfactory for solving nonlinear polynomial systems due to the propagation of error.

As the solution paths $\bfx(t)$ are defined implicitly, there are standard methods to mitigate error propagation.
Let $E$ be a system of $n$ polynomials in $n$ variables.
Given a point $\bfz_0$~  where the Jacobian matrix $JE=\frac{\partial E}{\partial \bfx}$ of $E$ is invertible, we may apply the
\demph{Newton operator}~\defcolor{$N_E$}  to~$\bfz_0$, obtaining $\bfz_1$,
\begin{equation}\label{Newton}
  \bfz_1\ \vcentcolon=\ N_E(\bfz_0)\ \vcentcolon=\ 
  \bfz_0 - \left( JE(\bfz_0)\right)^{-1} E(\bfz_0)\,.
\end{equation}
We explain this: if we approximate the graph of the function $E$ by its tangent plane at $(\bfz_0,E(\bfz_0))$, then
$\bfz_1\in\CC^n$ is the  unique zero of this linear approximation.
There exists a constant $0<c<1$ such that when $\bfz_0$ is sufficiently close to a regular zero $\bfz$ of $E$,
we have \demph{quadratic convergence} in that
\[
   \|\bfz_1-\bfz\|\ \leq\ c\|\bfz_0-\bfz\|^2\,.
\]
This is because $\bfz$ is a fixed point of $N_E$ at which the derivative of $N_E$ vanishes.
The inequality follows from standard error estimates from Taylor's Theorem for $N_E$ in a neighborhood of $\bfz$.
A consequence is that when $\bfz_0$ is sufficiently close to a regular zero $\bfz$, each Newton iterate starting from $\bfz_0$
doubles the number of accurate digits.
Such a point $\bfz_0$ is an \demph{approximate zero} of $F$.
This leads to algorithms to certify numerical output as explained in Section~\ref{sec:cert}.

\demph{Predictor-corrector algorithms} for solving the initial value problem for homotopy paths
take a discretization $1=t_0>t_1>\dotsb>t_m=0$ of the interval $[0,1]$ and iteratively compute
approximations $\bfx(1)=\bfx(t_0),\bfx(t_1),\dotsc,\bfx(t_m)=\bfx(0)$  to points on the solution path $\bfx(t)$.
This requires an initial approximation $\bfx_0$ to $\bfx(t_0) = \bfx(1)$.
Then, for $k=0,\dotsc,m{-}1$, given an approximation $\bfx_k$ to $\bfx(t_k)$, a prediction $\hat{\bfx}_{k+1}$ for $\bfx(t_{k+1})$ is
computed.
This typically uses one step in an iterative method for solving the initial value problem (a local solver).
This is the \demph{predictor step}. 
Next, one or more Newton iterations $N_E$ for $E(\bfx)=H(\bfx,t_{k+1})$ are applied to $\hat{\bfx}_{k+1}$ to obtain a new
approximation $\bfx_{k+1}$ to $\bfx(t_{k+1})$.
This is the \demph{corrector step}.
Predictor steps generally cause us to leave the proximity of the path being tracked; corrector steps bring us back.
The process repeats until $k=m{-}1$.

There are a number of efficient local solvers for solving initial value problems.
They typically use approximations to the Taylor series for the trajectory $\bfx(t)$ for $t$ near $t_k$.
For example, the Euler predictor uses the tangent line approximation,
\[\
   \hat{\bfx}_{k+1}\ \vcentcolon=\ \bfx_k+\Delta t_k \Delta \bfx_k
   \qquad\mbox{where}\qquad
   \frac{\partial H}{\partial \bfx}(\bfx_k,t_k) \cdot \Delta \bfx_k + \frac{\partial H}{\partial t}(\bfx_k,t_k)\ =\ 0\,.
\]
Here, $\Delta t_k=t_{k+1}-t_k$ and $(\Delta x_k,1)$ spans the kernel of the Jacobian $JH(\bfx_k,t_k)$.

Figure~\ref{Fig:Predictor-Corrector} illustrates an Euler prediction followed by Newton corrections.
\begin{figure}[htb]
  \centering
  \begin{picture}(330,166)(0,-6)
  \put(0,0){\includegraphics{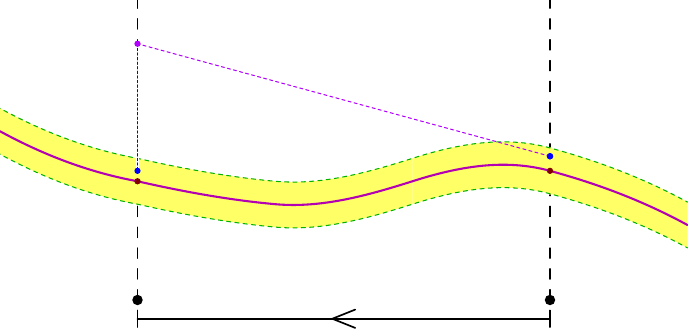}}

   \put(155,142){predictor} \put(157,137){\vector(-1,-1){15}} \put(40,138){{\color{Plum}$\hat{\bfx}_{k+1}$}}
   \put(91,104){corrector} \put(89,106.5){\vector(-1,0){21}}

   \put(96,85){{\color{blue}$\bfx_{k+1}$}}\put(94,87.5){{\color{blue}\vector(-3,-1){25}}}
    \put(284,109){{\color{blue}$\bfx_{k}$}}\put(285,105.5){{\color{blue}\vector(-1,-1){16}}}

  \put(6,40){{\color{mymaroon}$\bfx(t_{k+1})$}} \put(42,49){{\color{mymaroon}\vector(1,1){22}}}


  \put(214,45){{\color{mymaroon}$\bfx(t_{k})$}}   \put(240.7,54){{\color{mymaroon}\vector(1,1){22}}}

  \put(100,24){$\epsilon$-neighborhood} \put(135,34){\vector(0,1){15}}
  \put(43,13){$t_{k+1}$} \put(157,-9){$\Delta t_k$} \put(270,13){$t_{k}$}
  \end{picture}
  \caption{Euler prediction followed by Newton corrections.
    The image is adapted from \cite{BT_Intro} (we thank Sascha Timme for allowing us to use his figure).}
 \label{Fig:Predictor-Corrector}
\end{figure}
It suggests a stopping criterion for Newton iterations based on a fixed tolerance $\epsilon$.
Another is to apply Newton iterations until quadratic convergence is observed.

\begin{remark}\label{R:parallel}
  Since each solution path defined by $H(\bfx,t) = 0$ may be tracked independently,
  path-tracking is (in the words of Jan Verschelde) pleasingly parallelizable~\cite{VerscheldeYoffe2010Polynomial},
  which is a strength of polynomial homotopy continuation.\hfill$\diamond$  
\end{remark}

We have hardly mentioned 
the state of the art in path-tracking methods.
There is a significant literature on other predictor-corrector schemes (see \cite[Sections 15--18]{BC2013} for an overview), practical 
path-tracking heuristics~\cite{BertiniBook,Timme21}, and endgames for dealing with issues that arise near $t=0$,
such as divergent~\cite{Morgan:1986} or singular paths~\cite{Morgan:Sommese:Wampler:1992a}.
Indeed, the methods we describe would suffice only for the most 
well-conditioned paths ending at regular solutions at $t=0$.

\subsection{Squaring up}\label{SS:SquaringUp}
It is common to need to solve \demph{overdetermined systems}, which have more equations than variables.
While the version of the Parameter Continuation Theorem we presented holds for overdetermined systems, 
this presents a challenge as both the total degree homotopy of Section~\ref{sec:totalDegree} and the polyhedral homotopy from the next
section enable us to find all isolated solutions to a {\it square} system $F(\bfx)=0$ of polynomial equations.
Let us discuss an example, and then explain the method of squaring up, which reduces the problem of solving overdetermined systems to that
of solving square systems.

\begin{example}\label{Ex:squareUp}
  Let $A,B,C$ be the following $2\times 3$ matrices,
\[
  A\ \vcentcolon=\ \left(\begin{matrix}1&3&5\\2&4&6\end{matrix}\right)\ ,
    \qquad
  B\ \vcentcolon=\ \left(\begin{matrix}2&3&7\\2&5&-11\end{matrix}\right)\ ,
    \quad\mbox{ and }\quad 
  C\ \vcentcolon=\ \left(\begin{matrix}1&-1&1\\-2&3&-7\end{matrix}\right)\ .
\]
We ask for the matrices of the form $D(x,y)\vcentcolon=A+Bx+Cy$ that have rank 1.
This is given by the vanishing of the three $2\times 2$ minors of $D(x,y)$.
This overdetermined system of equations in $x,y$ has three solutions:
 \begin{equation}\label{ThreePoints}
    (-\tfrac{4}{5}, \tfrac{3}{5})\,,\  (-0.15019, 0.16729)\,,\,  (-0.95120, 2.8373)\,.
 \end{equation}
We may find these using the total degree homotopy as follows.
The determinants of the first two columns and the last two columns of $D(x,y)$ give a square subsystem of the system of three minors,
and these have $4=2\cdot 2$ solutions (the B\'ezout number).
In addition to the three solutions in~\eqref{ThreePoints}, the fourth is $(-13/14, 3/14)$.
Let $f(x,y)$ be the remaining minor. 
Then $f(-13/14, 3/14)=-963/98$, while the three solutions in~\eqref{ThreePoints} evaluate (close to) zero.
This is a simplification of the general scheme.  \hfill$\diamond$
\end{example}

Let $F$ be an overdetermined system consisting of $m$ polynomials in $n$ variables, where $m>n$.
\demph{Squaring up} $F$ replaces it by a square system $G(\bfx)\vcentcolon=M F(\bfx)$ as follows:
Let $M$ be a (randomly chosen) $n\times m$ complex matrix, so that $G(\bfx)$ consists of $n$ polynomials, each of which is a linear combination of
polynomials in $F(\bfx)$.
Next, find all isolated solutions to $G(\bfx)=0$.
Since the solutions of $F(\bfx)=0$ are among those of $G(\bfx)=0$, we need only to determine the zeros of $G$ which are not zeros of
$F$.
A simple way is to evaluate $F$ at each of the zeros of $G$ and discard those that do not evaluate to zero (according to some heuristic).
It is numerically more stable to apply the Newton operator for the  overdetermined system $F$ \cite{DS2000} to the zeros of $G$ and retain
those which converge quadratically.
Example~\ref{Ex:squareUp} is a simplification, using a very specific matrix $M$ rather than a randomly chosen matrix.  

\begin{remark}
Suppose that the overdetermined system $F(\bfx)$ depends on a parameter $\bfp\in\mathbb C^k$, i.e., we have
$F(\bfx)=F(\bfx;\bfp)$, and that for a general parameter $\bfp\in\mathbb C^k$ the system of equations $F(\bfx;\bfp)$ has $N>0$ isolated
solutions (as in the Parameter Continuation Theorem~\ref{parameter_theorem}). Suppose further that we have already computed all the
solutions of $F(\bfx;\bfp_0)=0$ for a fixed parameter $\bfp_0\in\mathbb C^k$ (either by squaring up or by using another method).
Then, we can use the Newton operator for overdetermined systems from \cite{DS2000} for polynomial homotopy continuation along the path~$F(\bfx;t\bfp_0 +(1-t)\bfp)$ for any other parameter $\bfp$.\hfill$\diamond$ 
\end{remark}

\section{Polyhedral homotopy}
\label{polyhedral-homotopy}

The total degree homotopy from \Cref{sec:totalDegree} may be used to compute all isolated zeros to any system of polynomial equations.
Its main flaw is that it is based on the B\'ezout bound.
Many polynomial systems arising in nature have B\'ezout bound  dramatically larger than their  number of zeros; for these,
the total degree homotopy will track many excess paths.

\begin{example}\label{Ex:biquadratic}
  To illustrate this potential inefficiency,
  consider the following problem, posed in~\cite{DHOST2016}:
  Find the distance from a point~$\bfx^*\in\RR^d$ to a hypersurface given by the vanishing of a single polynomial $f$.
  A first step is to compute all critical points of the distance function $\|\bfx-\bfx^*\|$ for $f(\bfx)=0$.
  We formulate this using a Lagrange multiplier $\lambda$,
  \[
  f(\bfx)\ =\ 0\ \quad\mbox{ and }\quad
  \lambda(\bfx-\bfx^*)\ =\ \nabla f(\bfx)\,.
  \]
  When $d=2$, $f=5-3x_2^2-3x_1^2+x_1^2x_2^2$, and $x^*=(0.025,0.2)$, these equations become
  \begin{equation}\label{Eq:biquadratic}
    5-3x_2^2-3x_1^2+x_1^2x_2^2\ =\ 0\quad\mbox{ and }\quad
    \lambda\left[\begin{array}{c} x_1-0.025\\x_2-0.2\end{array}\right]
      \ =\ \left[\begin{array}{c}
        -6x_1+2x_1x_2^2\vspace{2pt}\\-6x_2+2x_1^2x_2\end{array}\right]\ ,
  \end{equation}
  which are polynomials in $x_1,x_2,\lambda$ of degrees $4,3,3$, respectively.
  The system \eqref{Eq:biquadratic} has 12 solutions.
  We show the corresponding critical points below.
  \[
  \includegraphics[height=100pt]{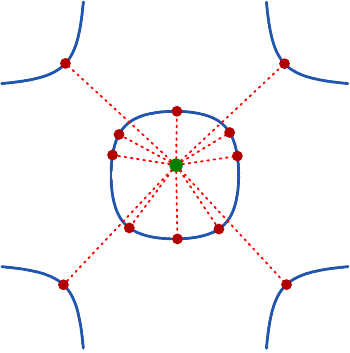}
  \]
  Note that a total degree homotopy for solving \eqref{Eq:biquadratic} follows $36>12$  homotopy paths. \hfill{$\diamond$}
\end{example}

The discrepancy between the B\'ezout bound of 36 and the actual number of 12 solutions in this example is not due to the coefficients
  in the polynomials, but rather the monomials which appear in them.
  A polynomial of degree $d$ in which not all terms of degree at most $d$ occur is called \demph{sparse}.

An alternative general-purpose homotopy is the \demph{polyhedral homotopy} of Sturmfels and Huber~\cite{HuSt95} for solving systems of
  sparse polynomials.
It is based on \demph{Bernstein's bound}~\cite[Thm.~A]{Bernshtein1975Number}, which is at most the B\'ezout bound and at least the
actual number of isolated zeros. 
Bernstein's bound is often significantly smaller than the B\'ezout bound, which makes the polyhedral homotopy an efficient tool for polynomial
homotopy continuation because it produces fewer excess paths to track.
Oftentimes it is optimal in that there are no excess paths to track~\cite{EDDMV,LindbergNicholsonRodriguez}.

The \demph{Polyhedral Homotopy Algorithm} is summarized in Algorithm \ref{polyhedral_algo} below.
It is implemented in \texttt{PHCpack} \cite{PHCpack}, the \texttt{HOM4PS} family \cite{HOM4PSArticle,LeeLiTsai2008HOM4PS}, and
\texttt{HomotopyContinuation.jl}~\cite{HC.jl}.
To understand how it works we first develop some theory.
We begin with Bernstein's bound.

\subsection{Bernstein's bound}\label{S:Bernstein}

The polyhedral homotopy takes place on the complex torus~$(\CC^\times)^n$, where
$\defcolor{\CC^\times}\vcentcolon=\CC\smallsetminus\{0\}$ is the set of invertible complex numbers.
Each integer vector $\bfa\in\ZZ^n$ gives a \demph{Laurent monomial} $\defcolor{\bfx^\bfa}\vcentcolon=x_1^{a_1}\dotsb x_n^{a_n}$, which is a
function on the torus $(\CC^\times)^n$. 
A linear combination of Laurent monomials,
\[
    f\ \vcentcolon=\ \sum_{\bfa\in\calA} c_{\bfa} \bfx^\bfa\qquad\quad c_\bfa\in\CC^\times\,,
\]
is a \demph{Laurent polynomial}.
The (finite) index set $\defcolor{\calA}\subset\ZZ^n$ is the \demph{support} of $f$.
The convex hull of $\calA$ is the \demph{Newton polytope} of $f$.
The support of the
polynomial $f$ in Example~\ref{Ex:biquadratic}
is the set containing the columns of the matrix
$\left(\begin{smallmatrix} 0&0&2&2\\0&2&0&2\end{smallmatrix}\right)$,
and its Newton polytope is the $2\times 2$ square, $[0,2]\times[0,2]$.

Bernstein's bound concerns square systems of Laurent polynomials, and it is in terms of mixed volume~\cite[pp.~116--118]{Ewald}.
The \demph{Minkowski sum} of polytopes $P$ and $Q$ in $\RR^n$ is
 \[
\defcolor{P+Q}\ \vcentcolon=\ \{\bfx + \bfy \mid \bfx\in P\mbox{ and }\bfy\in Q\}\,.
 \]
Given polytopes $P_1,\dotsc,P_n$ in $\RR^n$ and positive scalars $t_1,\dotsc,t_n$, Minkowski proved that the volume
$\vol( t_1 P_1 + \dotsb + t_n P_n)$ 
is a homogeneous polynomial in $t_1,\dotsc,t_n$ of degree $n$.
He defined the \demph{mixed volume}
\defcolor{$\MV(P_1,\dotsc,P_n)$} to be the coefficient of $t_1\dotsb t_n$ in that polynomial.
While mixed volume is in general hard to compute, when $n=2$, we have the formula
\begin{equation}\label{polarization}
   \MV(P,Q)\ =\ \vol(P+Q)-\vol(P)-\vol(Q)\,.
\end{equation}
This formula,
reminiscent of the inclusion-exclusion formula,
and its generalizations to $n> 2$ are the \demph{polarization identities}.

Consider \eqref{polarization} for the $2\times 2$ square and triangle below.
 \begin{equation}\label{Eq:MinkowskiSum}
   P\ =\ \raisebox{-15pt}{\includegraphics{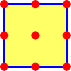}}
   \qquad
   Q\ =\ \raisebox{-15pt}{\includegraphics{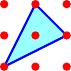}}
   \qquad
   P+Q\ =\ \raisebox{-30pt}{\includegraphics{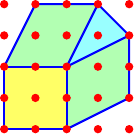}}
 \end{equation}
The final shape is the Minkowski sum $P+Q$, and the mixed volume $MV(P,Q)$ is the sum of the areas of the two (non-square) parallelograms,
which is 8. 

We give a version of Bernstein's Theorem~\cite[Thm.~A]{Bernshtein1975Number}.

\begin{theorem}[Bernstein]\label{thm: Bernstein}
  Let $f_1,\dotsc,f_n$ be Laurent polynomials
  and $P_1,\ldots,P_n$ be their respective Newton polytopes.
  The number of isolated solutions to $F=(f_1,\ldots,f_n)$ in $(\CC^\times)^n$ is at most $\MV(P_1,\dotsc,P_n)$.
\end{theorem}

\begin{example}\label{Ex:more EDD}
  We show the supports and Newton polytopes of the three polynomials from \Cref{Ex:biquadratic},
  $f$, $\lambda(x_1-x_1^*)-\partial f/\partial x_1$, and  $\lambda(x_2-x_2^*)-\partial f/\partial x_2$ (see Equation~\eqref{Eq:biquadratic}).
 \[
   \begin{picture}(95,90)(-7,0)
     \put(0,0){\includegraphics{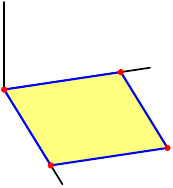}}
     \put(-7,80){\small$\lambda$}  \put(15,2){\small$x_1$}  \put(61,62){\small$x_2$}
   \end{picture}
     \qquad
   \begin{picture}(95,90)(-7,0)
     \put(0,0){\includegraphics{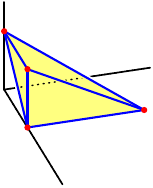}}
     \put(-7,80){\small$\lambda$}  \put(15,2){\small$x_1$}  \put(61,62){\small$x_2$}
   \end{picture}
    \qquad
   \begin{picture}(95,90)(-7,0)
     \put(0,0){\includegraphics{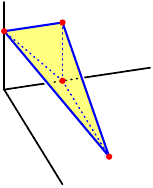}}
     \put(-7,80){\small$\lambda$}  \put(15,2){\small$x_1$}  \put(61,62){\small$x_2$}
   \end{picture}
   \]
   Their mixed volume is twelve, so the system~\eqref{Eq:biquadratic} achieves the Bernstein bound.  \hfill$\diamond$
\end{example}

Bernstein proved that this bound is typically achieved in the following sense:
For each $i=1,\dotsc,n$, let \defcolor{$\calA_i$} be the support of polynomial $f_i$ and \defcolor{$P_i$} its Newton polytope.
The set of polynomial systems $G=(g_1,\dotsc,g_n)$ where each $g_i$ has support a subset of $\calA_i$ is a vector space \defcolor{$V$} of
dimension $|\calA_1|+\dotsb+|\calA_n|$ whose coordinates are given by the coefficients of the polynomials $g_i$.
Bernstein showed that there is a nonempty Zariski open subset $U\subset V$ consisting of systems $G$ with exactly $\MV(P_1,\dotsc,P_n)$ regular
zeros.
Also, if \defcolor{$U'$} is the (larger) set of systems $G$ with exactly $\MV(P_1,\dotsc,P_n)$ solutions, counted with multiplicity, then
$U'$ is open, and Bernstein~\cite[Thm.~B]{Bernshtein1975Number} gave an effective criterion for when $G\in V\smallsetminus U'$.

We remark that Bernstein's bound and Bernstein's Theorem are often called the Bernstein-Kushnirenko-Khovanskii (BKK) bound and BKK Theorem
due to their joint paper~\cite{BKK} and the circle of closely related
work~\cite{Bernshtein1975Number,Kushnirenko1976Polyedres,Khovanskii1978Newton}.
%
%

\subsection{Polyhedral homotopy of Huber and Sturmfels}

In their seminal work~\cite{HuSt95}, Huber and Sturmfels developed a homotopy continuation method for solving systems of Laurent polynomials
that takes advantage of the polyhedral structure of the system and Bernstein's bound in that it tracks only $\MV(P_1,\dotsc,P_n)$ paths.
Their work also provided a new interpretation for mixed volume in terms of mixed cells
(the parallelograms in~\eqref{Eq:MinkowskiSum}) and a new proof of Bernstein's Theorem~\cite[Thm.~A]{Bernshtein1975Number}.
We sketch its main ideas in some detail.
This is also described in Sturmfels' award-winning article in the American Mathematical Monthly~\cite{StMon}.

\begin{remark}
 It is worth noting that around the same time,
 Verschelde, Verlinden, and Cools independently developed an alternative homotopy for solving systems of sparse polynomials
 that takes advantage of their combinatorial structure and Bernstein's bound \cite{VVC1994}.\hfill$\diamond$
\end{remark}

%
%

Suppose that $F=(f_1,\dotsc,f_n)$ is a system of Laurent polynomials and that $\calA_i$ is the support of $f_i$ for $i=1,\dotsc,n$, so that
 \begin{equation}\label{Eq:target}
   f_i\ =\ \sum_{\bfa\in\calA_i} c_{i,\bfa} \bfx^\bfa\,,\quad\mbox{with }c_{i,\bfa}\in \CC^\times\, .
 \end{equation}
For now, assume that $F$ is sufficiently generic, in that $F$ has $\MV(P_1,\dotsc,P_n)$ regular zeros.

In the polyhedral homotopy, the continuation parameter $t$ appears in a very different way than in the total degree homotopy~\eqref{E:TDH}.
First, the start system is at $t=0$ and the target system $F$ is at $t=1$, but this is not a substantive difference.
The polyhedral homotopy depends upon a choice of \demph{lifting functions}, $\defcolor{\omega_i}\colon\calA_i\to\ZZ$, for $i=1,\dotsc,n$.
That is, a choice of an integer $\omega_i(\bfa)$ for each monomial $\bfa$ in $\calA_i$.
We address this choice in \Cref{sec: mixed cells}.

Given lifting functions, define the homotopy $H(\bfx,t)\vcentcolon=(h_1,\dotsc,h_n)$ by
\[
  h_i(\bfx,t)\ \vcentcolon=\
   \sum_{\bfa\in\calA_i} c_{i,\bfa}\, \bfx^{\bfa}\, t^{\omega_i(\bfa)}\,.
\]
By the Parameter Continuation Theorem (Theorem~\ref{parameter_theorem}) and our genericity assumption on $F$,
over $t\in(0,1]$ the system of equations $H(\bfx,t)=0$ defines
  $\MV(P_1,\dotsc,P_n)$ smooth paths. 
It is however not at all clear what happens as $t\to 0$.
For example, $H(\bfx,0)$ is undefined if some $\omega_i(\bfa)<0$, and if $\omega_i(\bfa)>0$ for all $i$ and $\bfa$, then $H(\bfx,0)$ is
identically zero.

The key idea is to use an invertible change of coordinates to study the homotopy paths as $t\to 0$.
This coordinate change depends upon a \demph{weight} $\defcolor{\bfalpha}\in\ZZ^n$ and a positive integer~\defcolor{$r$}.
Together, they describe the order of a series
expansion of a subset of homotopy paths near $t=0$.
The weight gives a \demph{cocharacter} of the torus, for $s\in\CC^\times$, $\defcolor{s^{\bfalpha}}\vcentcolon=(s^{\alpha_1},\dotsc,s^{\alpha_n})$.
Set
\[
   \defcolor{\bfy}\ =\ \bfx\circ s^{-\bfalpha} \  \vcentcolon=\ (x_1 s^{-\alpha_1}, \dotsc, x_n s^{-\alpha_n})\,.
\]
Then $\bfx=\bfy\circ s^\bfalpha$, and we define
$H^{(\bfalpha)}(\bfy,s)\vcentcolon=(h_1^{(\bfalpha)},\dotsc,h_n^{(\bfalpha)})$,
where for $i=1,\dotsc,n$,
 \begin{equation}\label{Eq:alpha-system}
   \defcolor{h_i^{(\bfalpha)}(\bfy,s)}\ \vcentcolon=\ s^{-\beta_i} h_i(\bfy\circ s^{\bfalpha}, s^r)\ =\
   \sum_{\bfa\in\calA_i} c_{i,\bfa}\, \bfy^{\bfa}\, s^{\langle \bfalpha, \bfa\rangle + r\omega_i(\bfa)-\beta_i}\,,
 \end{equation}
where $\defcolor{\beta_i}\vcentcolon=\min\{ \langle \bfalpha, \bfa\rangle + r\omega_i(\bfa)\mid \bfa\in\calA_i\}$.
The purpose of $\beta_i$ is to ensure that $s$ appears in $h_i^{(\bfalpha)}(\bfy,s)$ with only non-negative exponents, and that
$h_i^{(\bfalpha)}(\bfy,0)$ is defined and not identically zero.
Specifically, if
$\defcolor{\calA_i^{(\bfalpha)}}\vcentcolon=\{\bfa\in\calA_i\mid  \langle \bfalpha, \bfa\rangle + r\omega_i(\bfa)=\beta_i\}$, then
 \begin{equation}\label{Eq:alpha-facial-system}
   h_i^{(\bfalpha)}(\bfy,0)\ =\
   \sum_{\bfa\in\calA_i^{(\bfalpha)}} c_{i,\bfa} \bfy^\bfa
   \qquad\mbox{for } i=1,\dotsc,n\,.
 \end{equation}
The purpose of the positive integer $r$ is to keep the exponents integral.
As $r>0$, we have that for $s\in[0,1]$,  $t=s^r\to 0$ if and only if $s\to 0$.
Thus the role of $r$ and $s$ is to parameterize the homotopy path
through a series expansion.
We remark on this later
in the sketch of the proof for Algorithm \ref{polyhedral_algo}.

We will see that for almost all $(\bfalpha,r)$, the system $H^{(\bfalpha)}(\bfy,0)$ has no zeros in $(\CC^*)^n$,
but for appropriately chosen $(\bfalpha,r)$,
the system~$H^{(\bfalpha)}(\bfy,0)$ has easily computed zeros, each defining a homotopy path to a solution of
$H^{(\bfalpha)}(\bfy,1)=H(\bfx,1)$.
For such an $(\bfalpha,r)$, $H^{(\bfalpha)}(\bfy,0)$ is a \demph{start subsystem}.
The polyhedral homotopy algorithm consists of determining those $(\bfalpha,r)$, solving 
the start subsystems $H^{(\bfalpha)}(\bfy,0)$, and then
tracking the homotopy paths from $t=0$ to $t=1$.

Before discussing this in more detail, including the role of the choices of lifting functions~$\omega_i$, cocharacter~$\bfalpha$, and
positive integer $r$, let us consider an example.

\begin{example}\label{Ex:phtpy}
  Let $f_1$ be the biquadratic from Example~\ref{Ex:biquadratic} and let
  $f_2= 1 + 2x_1x_2 - 5x_1x_2^2 - 3x_1^2 x_2$.
  Here, $\calA_1=\left(\begin{smallmatrix} 0&0&2&2\\0&2&0&2\end{smallmatrix}\right)$ and
  $\calA_2=\left(\begin{smallmatrix} 0&1&1&2\\0&1&2&1\end{smallmatrix}\right)$.
  Their Newton polygons are the square $P$ and triangle $Q$ in~\eqref{Eq:MinkowskiSum}.
  Figure~\ref{F:Eight_Sols} shows their $8=\MV(P,Q)$ common zeros.
  \begin{figure}[htb]
  \centering
   \begin{picture}(169,120)(-3.5,0)
     \put(0,0){\includegraphics[height=120pt]{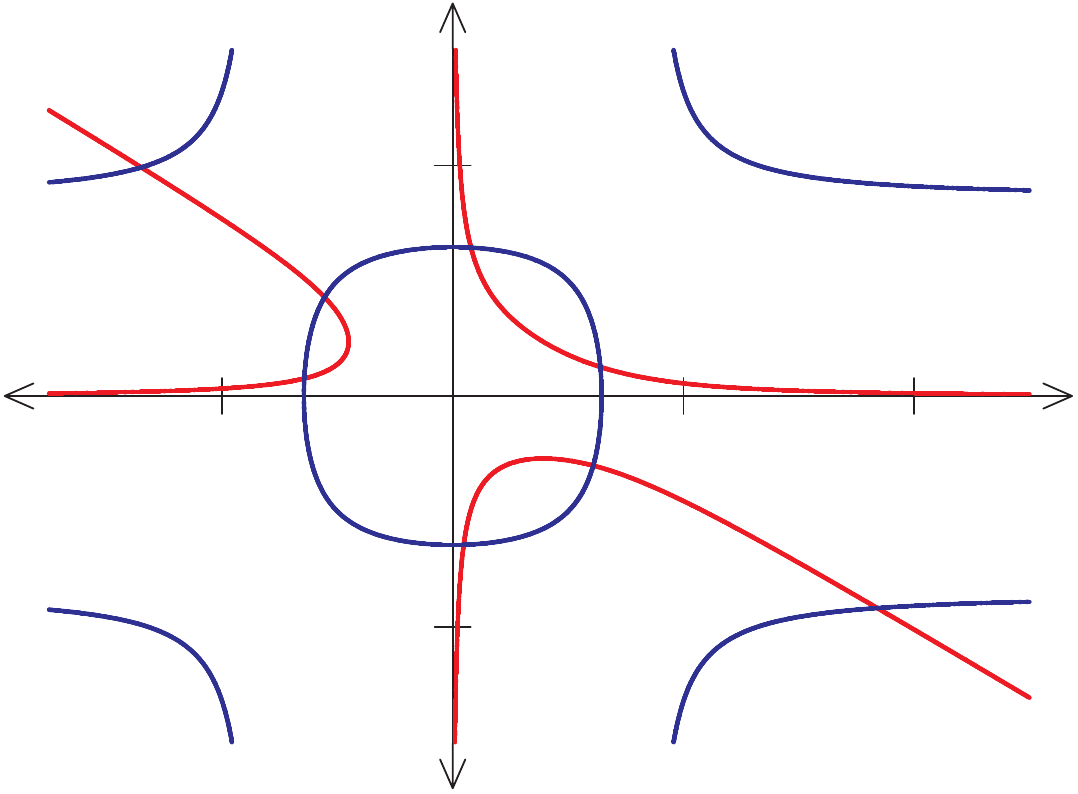}}
     \put( 37.5, 88.5){{\color{myred}\small$f_2$}}     \put(108, 67){{\color{myred}\small$f_2$}}
     \put(141,  9){{\color{myred}\small$f_2$}}
     \put(-3.5,90){{\color{myblue}\small$f_1$}}     \put(158, 90){{\color{myblue}\small$f_1$}}
         \put(88,80){{\color{myblue}\small$f_1$}}
     \put(-3.5,26){{\color{myblue}\small$f_1$}}     \put(158,26){{\color{myblue}\small$f_1$}}
   \put(22,48){\small$-2$}    \put(100,48){\small$2$}   \put(135,48){\small$4$}
    \put(72,23){\small$-2$}    \put(72,92){\small$2$}
  \end{picture}
  \caption{
    The zero sets of $f_1$ (blue) and $f_2$ (red) together with their intersections}.\label{F:Eight_Sols}
  \end{figure}

Define $\omega_1$ to be identically zero and set $\omega_2\left(\begin{smallmatrix}a\\b\end{smallmatrix}\right)\vcentcolon=a+b$.
Then $h_1(\bfx,t)=f_1(\bfx)$ and
\[
h_2(\bfx,t)\ =\ 1 + 2 x_1 x_2 t^2 - 5x_1x_2^2 t^3 - 3x_1^2 x_2 t^3\,.
\]
Let $\bfalpha=(0,-3)$ and $r=2$, so that $x_1=y_1$ and $x_2=y_2 t^{-3}$.
Then we may check that $\beta_1=-6$ and $\beta_2=\min\{0-0,4-3,6-6,6-3\}=0$, so that
\begin{eqnarray*}
  h_1^{(\bfalpha)}(\bfy,s) &=& \underline{- 3 y_2^2 + y_1^2y_2^2}\ +\ s^6(5 - 3y_1^2)\,,\\
  h_2^{(\bfalpha)}(\bfy,s) &=& \underline{1 - 5 y_1y_2^2}\ +\ s(2y_1y_2  - 3y_1^2 y_2 s^2)\,.
\end{eqnarray*}
The underlined terms are the binomials $h_1^{(\bfalpha)}(\bfy,0)$ and $h_2^{(\bfalpha)}(\bfy,0)$. 
They have four solutions,
\[
   ( \sqrt{3}, 75^{-\frac14})\,,\ 
   ( \sqrt{3},-75^{-\frac14})\,,\ 
   (-\sqrt{3}, 75^{-\frac14})\,,\ 
   (-\sqrt{3},-75^{-\frac14})\,. 
\]
These four solutions lead to four homotopy paths, which is fewer than the $8=\MV(P,Q)$ paths defined by $H(\bfx,t)=0$ over
$t\in(0,1]$.

If we let $\bfgamma=(-3,0)$ and $r=2$, then
\begin{eqnarray*}
  h_1^{(\bfgamma)}(\bfy,s) &=& - 3 y_1^2 + y_1^2y_2^2\ +\ s^6(5 - 3y_2^2)\,,\\
  h_2^{(\bfgamma)}(\bfy,s) &=& 1 - 3y_1^2 y_2 \ +\ s(2y_1y_2 - 5 y_1y_2^2 s^2)\,,
\end{eqnarray*}
so that $h_1^{(\bfgamma)}(\bfy,0)$ and $h_2^{(\bfgamma)}(\bfy,0)$ are again binomials and they have four solutions
\[
   ( 27^{-\frac14}, \sqrt{3})\,,\ 
   (-27^{-\frac14}, \sqrt{3})\,,\ 
   ( 27^{-\frac14},-\sqrt{3})\,,\ 
   (-27^{-\frac14},-\sqrt{3})\,.
\]
These lead to the other four homotopy paths.
The partition $4+4=8$ is seen in the decomposition of $P+Q$ in~\eqref{Eq:MinkowskiSum}.
The weight $\bfalpha$ corresponds to the parallelogram on the upper left, which is the Minkowski sum of the supports of
the components $h_1^{(\bfalpha)}(\bfy,0)$ and $h_2^{(\bfalpha)}(\bfy,0)$ of the start subsystem, and the weight $\bfgamma$ corresponds to
the parallelogram on the lower right. 
The only weights and positive integers for which the start subsystem has solutions are positive multiples of $(\bfalpha,2)$ and
$(\bfgamma,2)$.\hfill$\diamond$ 
\end{example}

\subsection{Mixed cells}\label{sec: mixed cells}
In \Cref{Ex:phtpy} only two choices of $(\bfalpha, r)$ gave start subsystems  $H^{(\bfalpha)}(\bfy,0)$ with solutions.
We now address the problem of computing the pairs $(\bfalpha, r)$, such that $H^{(\bfalpha)}(\bfy,0)$ has solutions.
This leads to an algorithm that computes these pairs given the start system $F=(f_1,\ldots,f_n)$.
We will show that the pairs $(\bfalpha, r)$ which give zeros at $t=0$ correspond to certain \demph{mixed cells}
in a decomposition of the Minkowski sum $P_1+\cdots+P_n$, where $P_i$ is the Newton polytope of $f_i$.

We examine the geometric combinatorics of the lifting functions $\omega_i$,  weights $\bfalpha$, and positive integer $r$.
Let $P\subset\RR^{n+1}$ be a polytope.
If $P$ has the same dimension as its projection to $\RR^n$, then it and all of its faces are \demph{lower faces}.
Otherwise, replace $\RR^{n+1}$ by the affine span of $P$ and assume that $P$ has dimension $n+1$.
A \demph{lower facet} of $P$ is a \demph{facet} $Q$ of $P$ ($\dim Q=n$) whose inward-pointing normal vector has positive last coordinate.
A \demph{lower face} of $P$ is any face lying in a lower facet.
The union of lower faces forms the \demph{lower hull} of $P$.

Let $\calA\subset\ZZ^n$ be a finite set and $\omega\colon\calA\to\ZZ$ be a lifting function.
The \demph{lift} of $\calA$ is the set
\[
\defcolor{\widehat{\calA}}\ \vcentcolon=\ \{(\bfa,\omega(\bfa))\mid\bfa\in\calA\}\subset\ZZ^{n+1}\,.
\]
Let $\defcolor{\widehat{P}}\vcentcolon=\conv(\widehat{\calA})$ be its convex hull.
Given a lower face $Q$ of $\widehat{P}$, the projection to $\ZZ^n$ of $Q\cap\widehat{\calA}$ is a subset $\defcolor{\calC(Q)}$ of $\calA$
whose convex hull is the projection to $\RR^n$ of $Q$.
If $(\bfalpha,r)$ is upward-pointing ($r>0$) and $\bfa \mapsto \langle\bfalpha,\bfa\rangle + r\omega(\bfa)$ achieves its minimum on
$Q$, then we may observe that $\calC(Q)= \calA^{(\bfalpha)}$.

For each $i=1,\dotsc,n$, let $\calA_i\subset\ZZ^n$ be a finite set, $\omega_i\colon\calA_i\to\ZZ$ be a lifting function, and set
$\widehat{P}_i\vcentcolon=\conv(\widehat{\calA}_i)$.
Let $\defcolor{\widehat{P}}\vcentcolon=\widehat{P}_1+\dotsb+\widehat{P}_n$ be their Minkowski sum.
As $\widehat{P}$ is a Minkowski sum, if $Q$ is a lower face of $\widehat{P}$, for each $i=1,\dotsc,n$ there is a lower face
$Q_i$ of $\widehat{P}_i$ with 
 \begin{equation}
   \label{Eq:lowerSum}
   Q\ =\ Q_1+\dotsb+Q_n\,.
 \end{equation}
%

 \begin{definition}\label{def_generic_lifting_function}
   Lifting functions $\omega_i\colon\calA_i\to\ZZ$ for $i=1,\dotsc,n$ are
   \demph{generic} if for each lower facet
   $Q = Q_1 + \cdots + Q_n$ of $\widehat{P}$, with $Q_i$ being a lower face of $\widehat{P}_i$ for $i=1,\ldots,n$, we have that
   \begin{equation}
     \label{Eq:dimSum}
   \dim Q\ =\ n\ =\ \dim Q_1 + \dotsb + \dim Q_n\,,
 \end{equation}
and $\dim Q_i = \# (Q_i\cap\widehat{\calA}_i) - 1$.

A lower facet $Q$ for which every $Q_i$ in~\eqref{Eq:dimSum} has dimension 1 (and thus $\# Q_i\cap\widehat{\calA}_i=2$) is a 
\demph{mixed facet} and its projection to $\RR^n$ is a \demph{mixed cell}.
Mixed facets and mixed cells are parallelepipeds (Minkowski sums of independent line segments).
\hfill$\diamond$
 \end{definition}

Huber and Sturmfels show that almost all real lifting functions are generic and the density of rational numbers
implies that there exist generic integral lifting functions.
Setting $P_i\vcentcolon=\conv(\calA_i)$ for $i=1,\dotsc,n$, then the projection to $\RR^n$ of the lower faces of $\widehat{P}$ forms a polyhedral
subdivision of the Minkowski sum $P_1+\dotsb+P_n$, called a \demph{mixed decomposition}.

This leads to a new interpretation for mixed volume.

\begin{theorem}[Huber-Sturmfels]
  Suppose that $\omega_i\colon\calA_i\to\ZZ$ for $i=1,\dotsc,n$ are generic lifting functions.
  Then the mixed volume $\MV(P_1,\dotsc,P_n)$ is the sum of the volumes of the mixed cells in the
  induced polyhedral decomposition of the Minkowski sum $P_1+\dotsb+P_n$.
\end{theorem}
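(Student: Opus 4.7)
The plan is to compute $\vol(t_1 P_1 + \dotsb + t_n P_n)$ as a polynomial in $t_1,\dotsc,t_n$ by decomposing the scaled Minkowski sum, and then to read off the coefficient of $t_1 \dotsb t_n$ cell by cell. First I would argue that the mixed decomposition of $P_1+\dotsb+P_n$ induced by the generic lifting functions scales naturally: if $C = Q_1+\dotsb+Q_n$ is a cell (with each $Q_i$ a face of $P_i$), then the face of $t_iP_i$ sharing the normal vector of $Q_i$ is exactly $t_iQ_i$, so the collection $\{t_1 Q_1+\dotsb+t_n Q_n\}$ is a polyhedral decomposition of $t_1 P_1+\dotsb+t_n P_n$ for every choice of positive scalars $t_i$.

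The heart of the argument is a cellwise volume formula. Fix a cell $C = Q_1+\dotsb+Q_n$ with $\dim Q_i = k_i$. Since $C$ is a top-dimensional cell of the decomposition, $\sum_i k_i = n$, and the condition $\dim(Q_1+\dotsb+Q_n)=n$ forces the linear spans $V_i$ of the $Q_i-q_i$ to give a direct sum decomposition $\RR^n = V_1\oplus\dotsb\oplus V_n$. The Minkowski sum map $Q_1\times\dotsb\times Q_n \to C$ is therefore an affine isomorphism with constant Jacobian, so scaling $Q_i$ by $t_i$ scales the intrinsic $k_i$-volume of the $i$-th factor by $t_i^{k_i}$ and hence
\[
\vol(t_1 Q_1+\dotsb+t_n Q_n)\ =\ t_1^{k_1}\dotsb t_n^{k_n}\,\vol(C)\,.
\]
Summing over all cells, $\vol(t_1P_1+\dotsb+t_nP_n) = \sum_C t_1^{k_1(C)}\dotsb t_n^{k_n(C)}\vol(C)$, which manifestly confirms Minkowski's theorem that this is a homogeneous polynomial of degree $n$ in the $t_i$.

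Finally, I would extract the coefficient of $t_1\dotsb t_n$. A cell $C$ contributes a monomial $t_1^{k_1(C)}\dotsb t_n^{k_n(C)}$ with $k_i(C)\ge 0$ and $\sum_i k_i(C)=n$, so it contributes to $t_1\dotsb t_n$ precisely when $k_i(C)=1$ for every $i$; by Definition~\ref{def_generic_lifting_function} these are exactly the mixed cells. Therefore $\MV(P_1,\dotsc,P_n) = \sum_{C\text{ mixed}}\vol(C)$, as claimed. The main obstacle is the cellwise volume formula: it rests on the genericity hypothesis~\eqref{Eq:dimSum}, which is what guarantees that the factors $V_i$ are independent and that the Minkowski sum map has non-degenerate Jacobian. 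Without genericity, cells of intermediate type would have overlapping affine directions, the volume would fail to factor as a monomial in the $t_i$, and monomials of the wrong multidegree could contribute spuriously to the coefficient of $t_1\dotsb t_n$.
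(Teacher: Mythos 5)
Your proposal is correct and takes essentially the same approach as the paper: scale the lifted polytopes multilinearly, observe that the induced mixed decomposition of $t_1P_1+\dotsb+t_nP_n$ has cells $t_1Q_1+\dotsb+t_nQ_n$, use the genericity condition $\sum_i\dim Q_i=n$ to deduce that each cell contributes $t_1^{\dim Q_1}\dotsb t_n^{\dim Q_n}\vol(\pi(Q))$ to the volume polynomial, and read off the coefficient of $t_1\dotsb t_n$. You fill in a bit more detail than the paper does on why the cellwise volume factors as a monomial (the direct-sum decomposition of $\RR^n$ and the constant-Jacobian Minkowski-sum map), but the argument is the same.
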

\begin{proof}
  These constructions---the lifts $\widehat{P}_i$, lower faces, and the mixed subdiv\-ision---scale  multilinearly with
  positive $t_1,\dotsc,t_n\in\RR$.
  For example, a lower face $Q=Q_1+\dotsb+Q_n$~\eqref{Eq:lowerSum} of $\widehat{P}_1+\dotsb+\widehat{P}_n$ corresponds to a lower face
  $t_1Q_1+\dotsb+t_nQ_n$ of $t_1\widehat{P}_1+\dotsb+t_n\widehat{P}_n$.
  Let~$\pi\colon\RR^{n+1}\to\RR^n$ be the projection.
  This shows
  \[
  \vol(t_1P_1+\cdots + t_n P_n)\ =\ \sum_{Q} \vol(\pi(t_1Q_1+\dotsb+t_nQ_n))\,,
  \]
  the sum over all lower facets $Q$.
  By Condition~\eqref{Eq:dimSum}, $n=\dim(Q_1)+\dotsb+\dim(Q_n)$, and thus 
  \[
  \vol(\pi(t_1Q_1+\dotsb+t_nQ_n))\ =\
  t_1^{\dim(Q_1)}\dotsb t_n^{\dim(Q_n)} \vol(\pi(Q))\,.
  \]
  Hence the coefficient of $t_1\dotsb t_n$ in $\vol(t_1P_1+\dotsb+t_nP_n)$ is the sum of the volumes of the mixed cells.
\end{proof}

\begin{example}\label{Ex:MixedDecomposition}
  Let us consider this construction on our running example, using the lifts from Example~\ref{Ex:phtpy}.
  Figure~\ref{Fi:mixed} shows two views of the lower hull of the Minkowski sum $\widehat{P}+\widehat{Q}$,
  along with the mixed decomposition.
  \begin{figure}[htb]
  \centering
  \begin{picture}(140,120)(-12,0)
      \put(0,0){\includegraphics[height=120pt]{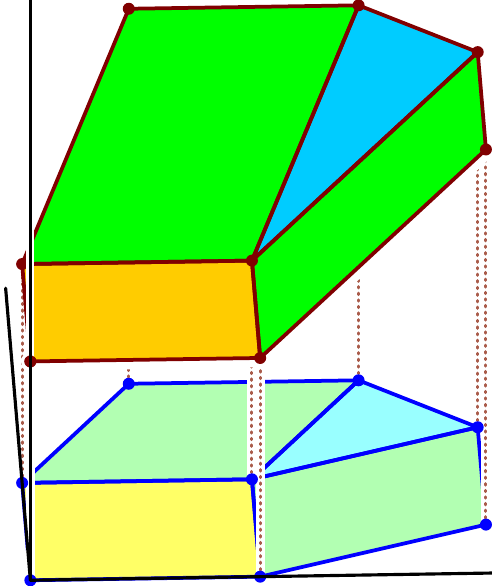}}
      \put(-4,112){\small$\omega$}   \put(-12,55){\small$x_2$}   \put(111,6){\small$x_1$}
      \put(30,30){\small$\bfalpha$}      \put(70,16){\small$\bfgamma$}
      \put(108,90){\vector(0,-1){60}}    \put(111,57){\small$\pi$}

      \put(25, 8){\small$P$}      \put(25, 52){\small$\widehat{P}$}
      \put(70,31){\small$Q$}      \put(72,101){\small$\widehat{Q}$}
  \end{picture}
  \qquad\qquad
    \begin{picture}(150,120)(-3,0)
      \put(0,0){\includegraphics[height=120pt]{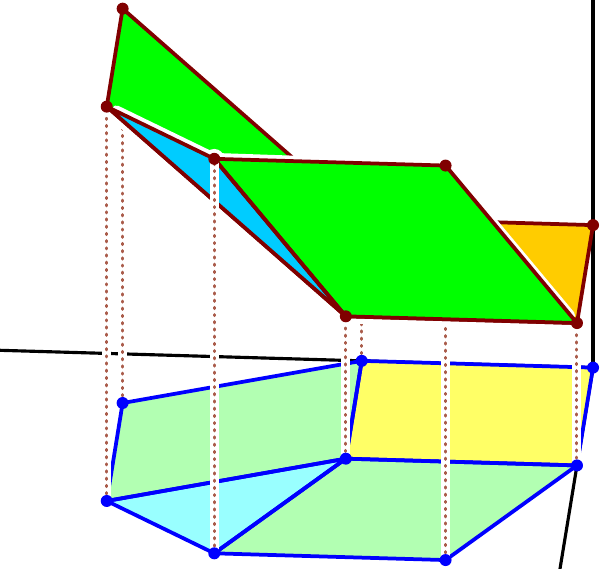}}
      \put(128,112){\small$\omega$}   \put(0,50){\small$x_1$}   \put(122,2){\small$x_2$}
      \put(78,10){\small$\bfalpha$}   \put(55,30){\small$\bfgamma$}
      \put(135,80){\vector(0,-1){40}}    \put(138,57){\small$\pi$}

      \thicklines
        \put(114.5,81){{\color{white}\vector(0,-1){16}}}
        \put(114  ,81){{\color{white}\vector(0,-1){16}}}
        \put(113.5,81){{\color{white}\vector(0,-1){16}}}
        
        \put(15,84.5){{\color{white}\vector(1,0){27}}}
        \put(15,84  ){{\color{white}\vector(1,0){27}}}
        \put(15,83.5){{\color{white}\vector(1,0){27}}}

        \put(15,6.5){{\color{white}\vector(4,1){27}}}
        \put(15,6  ){{\color{white}\vector(4,1){27}}}
        \put(15,5.5){{\color{white}\vector(4,1){27}}}
        
      \thinlines
      
      \put(101,32){\small$P$}   \put(111, 83){\small$\widehat{P}$} \put(114,81){\vector(0,-1){15}} 

      \put(4,2){\small$Q$}            \put(15,6){\vector(4,1){26}}
      \put(4,81){\small$\widehat{Q}$} \put(15,84){\vector(1,0){26}}

    \end{picture}

  \caption{Two views of the lower hull of the lift and the induced mixed subdivision with mixed cells labeled by corresponding cocharacters.}
  \label{Fi:mixed}
  \end{figure}
  Note that $\widehat{P}=P$ as the lifting function is 0 and $\widehat{Q}$ is affinely equivalent to $Q$.
  There are two mixed lower facets, whose corresponding mixed cells are the parallelograms of~\eqref{Eq:MinkowskiSum}, showing them to be mixed
  cells of the mixed subdivision induced by these lifts.
  The dot product with $(\bfalpha,2)=(0,-3,2)$ is minimized along the mixed lower facet
  $\conv\{(0,2,0), (2,2,0), (3,4,3), (1,4,3)\}$ with minimal value $-6$ and the dot product with $(\bfgamma,2)=(-3,0,2)$ is minimized along
  the mixed lower facet $\conv\{(2,0,0), (2,2,0), (4,3,3), (4,1,3)\}$ with minimal value $-6$.  \hfill$\diamond$
\end{example}

Keeping Example~\ref{Ex:MixedDecomposition} in mind, we return to our problem of studying the homotopy given by generic lifting functions
$\omega_i\colon\calA_i\to\ZZ$ for $i=1,\dotsc,n$, 
for the supports of our target system~\eqref{Eq:target}.
Let  $\widehat{P}\vcentcolon=\widehat{P}_1+\dotsb+\widehat{P}_n$ be the Minkowski sum of the convex hulls $\widehat{P}_i$ of the lifted
supports $\widehat{\calA}_i$.
A vector $(\bfalpha,r)\in\ZZ^{n+1}$ with $r>0$ is \demph{upward-pointing}, and the linear function $\langle(\bfalpha,r),-\rangle$ it defines
achieves its minimum on $\widehat{P}$ along a lower face $Q$---the lower face of $\widehat{P}$ \demph{exposed} by $(\bfalpha,r)$.
When $Q$ has the form~\eqref{Eq:lowerSum}, then for each $i=1,\dotsc,n$, $Q_i$ is the lower face of $\widehat{P}_i$ exposed by
$(\bfalpha,r)$, and the minimum value of $\langle(\bfalpha,r),-\rangle$ along $Q_i$ is
 \begin{equation}\label{Eq:beta}
  \min \{ \langle(\bfalpha,r), (\bfa,\omega(\bfa))\rangle = \langle \bfalpha,\bfa\rangle + r\omega(\bfa)
  \mid \bfa\in\calA_i\}\ =\ \beta_i\,,
 \end{equation}
which explains the geometric significance of $(\bfalpha,r)$ and of $\beta_i$.
When $Q$ is a facet, there is a unique primitive (components have no common factor) upward-pointing integer vector  $(\bfalpha,r)$
that exposes $Q$.
In this case, $\calA_i^{(\bfalpha)}=\pi(Q_i\cap\widehat{\calA}_i)=\calC(Q_i)$ is the support of $h^{(\bfalpha)}(\bfy,0)$.

We explain the algebraic consequences of this geometric combinatorics.
Suppose that $H^{(\bfalpha)}(\bfy,s)$ is the system of polynomials $h_i^{(\bfalpha)}(\bfy,s)$ defined by~\eqref{Eq:alpha-system}.
Then $H^{(\bfalpha)}(\bfy,0)$ is given by the polynomials $h_i^{(\bfalpha)}(\bfy,0)$  of~\eqref{Eq:alpha-facial-system}.
If, for some $i$, $\#\calC(Q_i)=1$, so that $\dim Q_i=0$, then $h_i^{(\bfalpha)}(\bfy,0)$ is a monomial and therefore
$H^{(\bfalpha)}(\bfy,0)$ has no solutions in $(\CC^\times)^n$.

Suppose that  $H^{(\bfalpha)}(\bfy,0)$ has solutions in $(\CC^\times)^n$.
Necessarily, $\dim Q_i\geq 1$ for all $i$.
By~\eqref{Eq:dimSum}, $\dim Q_i=1$ and $\#\calC(Q_i)=2$ for all $i$, and thus $Q$ is a mixed facet.
Consequently, each $h_i^{(\bfalpha)}(\bfy,0)$ is a binomial and $H^{(\bfalpha)}(\bfy,0)$ is a system of independent binomials, which may be
solved by inspection. 
Thus the only start subsystems $H^{(\bfalpha)}(\bfy,0)$ with solutions are those for which $(\bfalpha,r)$ exposes a mixed facet
$Q$ of $\widehat{P}$.
The following proposition, whose proof is sketched in Section~\ref{S:binomial}, records the number of solutions to such a mixed system.

\begin{proposition}\label{P:binomialSystem}
  The number of solutions to the system of binomials $H^{(\bfalpha)}(\bfy,0)$ is the volume of the mixed cell
  $\pi(Q)=\conv(\calC(Q_1)+\dotsb+\calC(Q_n))$.
\end{proposition}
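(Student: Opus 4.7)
The plan is to reduce $H^{(\bfalpha)}(\bfy,0)$ to a decoupled system of pure-power equations via the Smith normal form of an exponent matrix, and then to recognize the resulting solution count as the volume of $\pi(Q)$.

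First I would unpack the structure of $H^{(\bfalpha)}(\bfy,0)$. Since $Q$ is a mixed facet, each $\calC(Q_i)$ consists of exactly two points $\bfa_i,\bfb_i\in\calA_i$, so by~\eqref{Eq:alpha-facial-system} the $i$-th equation is $c_{i,\bfa_i}\bfy^{\bfa_i}+c_{i,\bfb_i}\bfy^{\bfb_i}=0$ with $c_{i,\bfa_i},c_{i,\bfb_i}\in\CC^\times$. On $(\CC^\times)^n$ this is equivalent to $\bfy^{\bfv_i}=e_i$, where $\bfv_i:=\bfa_i-\bfb_i\in\ZZ^n$ and $e_i:=-c_{i,\bfb_i}/c_{i,\bfa_i}\in\CC^\times$. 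Let $V\in\ZZ^{n\times n}$ be the matrix with columns $\bfv_1,\ldots,\bfv_n$, and set $\mathbf{e}:=(e_1,\ldots,e_n)$; the system then reads $\phi_V(\bfy)=\mathbf{e}$, where $\phi_V\colon(\CC^\times)^n\to(\CC^\times)^n$ is the monomial map $\bfy\mapsto(\bfy^{\bfv_1},\ldots,\bfy^{\bfv_n})$.

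Second I would tie $V$ to $\pi(Q)$. The mixed cell $\pi(Q)=\pi(Q_1)+\cdots+\pi(Q_n)$ is a Minkowski sum of segments whose edge vectors are the $\bfv_i$. Because $Q$ is an $n$-dimensional lower face of the $(n{+}1)$-dimensional polytope $\widehat{P}$, the projection $\pi$ is injective on $Q$ and $\pi(Q)$ is $n$-dimensional. Hence $\bfv_1,\ldots,\bfv_n$ are $\RR$-linearly independent, $\det V\neq 0$, and $\pi(Q)$ is a translate of the parallelepiped spanned by $\bfv_1,\ldots,\bfv_n$, so $\vol(\pi(Q))=|\det V|$.

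Third I would count solutions of $\phi_V(\bfy)=\mathbf{e}$ using the Smith normal form $V=UDW$, where $U,W\in GL_n(\ZZ)$ and $D=\mathrm{diag}(d_1,\ldots,d_n)$ with positive integers $d_i$. The unimodular matrices $U$ and $W$ define monomial automorphisms of $(\CC^\times)^n$, and a direct calculation gives $\phi_V=\phi_W\circ\phi_D\circ\phi_U$, so $|\phi_V^{-1}(\mathbf{e})|=|\phi_D^{-1}(\mathbf{e}')|$ for $\mathbf{e}':=\phi_W^{-1}(\mathbf{e})\in(\CC^\times)^n$. Since $\phi_D$ is the map $\bfz\mapsto(z_1^{d_1},\ldots,z_n^{d_n})$, its fiber over $\mathbf{e}'$ has exactly $d_1\cdots d_n=|\det D|=|\det V|$ elements. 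Combining this with $\vol(\pi(Q))=|\det V|$ from the previous step yields the proposition.

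The main (admittedly routine) obstacle is the bookkeeping of the monomial maps: one must check that $\phi_U$ and $\phi_W$ really are bijections on $(\CC^\times)^n$ and that the composition law holds with the correct ordering so that the fiber count comes out to exactly $|\det V|$. Both are standard consequences of the fact that $\det U=\pm 1=\det W$ for matrices in $GL_n(\ZZ)$.
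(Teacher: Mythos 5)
Your proof is correct and follows essentially the same route as the paper: rewrite each binomial as a pure-power equation $\bfy^{\bfv_i}=e_i$, observe that $\vol(\pi(Q))=|\det V|$ since $\pi(Q)$ is a translate of the parallelepiped spanned by the edge vectors $\bfv_i$, and count fibers via the Smith normal form of $V$. The only cosmetic difference is that you correctly keep the absolute value $|\det V|$, whereas the paper's sketch asserts $\det A=\vol(\pi(Q))$ without the absolute value.
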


\subsection{The Polyhedral Homotopy Algorithm}
We sketch this algorithm and provide a brief argument about its correctness. 

\begin{algorithm}
\caption{The Polyhedral Homotopy Algorithm\label{polyhedral_algo}}
\SetAlgoLined

\KwIn{A  system  $F=(f_1,\dotsc,f_n)$ of $n$ polynomials in $n$ variables, where $f_i$ has support $\calA_i$ and Newton polytope $P_i$.
The system $F$ is assumed general and has $\MV(P_1,\dotsc,P_n)$ regular solutions in $(\CC^*)^n$}.

\KwOut{All complex zeros of $F$.}
Compute generic lifting functions $\omega_i\colon\calA_i\to\ZZ$ (see Definition \ref{def_generic_lifting_function}).
They define mixed cells in the Minkowski sum $P = P_1+\dotsb+P_n$; 

\For{each mixed cell $Q$ of $P$}{
Compute the pair $(\bfalpha,r)$ given as the primitive upward pointing normal of the mixed facet of $\widehat P$ that corresponds to $Q$.

Solve the start subsystem $H^{(\bfalpha)}(\bfy,0)$ and then use homotopy
continuation to track those solutions along the homotopy $H^{(\bfalpha)}(\bfy,s)$ from $s=0$ to $s=1$, giving solutions to
$H^{(\bfalpha)}(\bfy,1)$.\label{Step_6}
}

The solutions computed in Step~\ref{Step_6} to $H^{(\bfalpha)}(\bfy,1)=H(\bfx,1)=F(\bfx)$ for all mixed cells are all the
solutions to $F(\bfx)$.
\end{algorithm}

%
%
%

\begin{proof}[Sketch of Proof of Correctness] 
  The system of equations $H(\bfx,t)=0$ defines an algebraic curve $C$ in $(\CC^\times)^n\times\CC^\times_t$ whose projection onto $\CC^\times_t$
  has degree equal to~$\defcolor{\MV}\vcentcolon=\MV(P_1,\dotsc,P_n)$ with the fiber over $t=1$ having $\MV$ points.
  This curve has $\MV$ branches near $t=0$, each of which is a point $\bfz(t)$ in $\CC\{t\}^n$.
  Here, $\CC\{t\}$ is the field of Puiseux
  series, which contains the algebraic closure of the field $\CC(t)$ of rational functions in
  $t$~\cite[Sect.\ 2.5.3]{Shafarevich}.
  Elements of $\CC\{t\}$ may be represented by fractional power series of the form
  \[
  \sum_{m\geq N} b_m t^{m/r}\,,
  \]
  where $m,N,r\in\ZZ$ with $r>0$, and $b_m\in\CC$.
  Observe that both the exponents of $t$ and the denominators in those exponents are bounded below. 

  Fix a branch $\bfz(t)$ of $C$ and let $r$ be the least common denominator of all exponents of coordinates of $\bfz(t)$.
  Consider the lowest order terms of the coordinates in $\bfz(t)$, 
  \[
     ( c_1 t^{\alpha_1/r},\dotsc, c_n t^{\alpha_n/r})\,,
  \]
  where $\alpha_i\in\ZZ$ and $r\in\NN$.
  Set $\bfalpha\vcentcolon=(\alpha_1,\dotsc,\alpha_n)$.
  The substitution $t=s^r$ clears the denominators, converting $\bfz(t)$ to a vector $\bfz(s^r)$ of Laurent series in $s$.
  The coordinate change $\bfz(s^r)\circ s^{-\bfalpha}$ converts these Laurent series to ordinary power series with constant coefficients
  $\bfc\vcentcolon=(c_1,\dotsc,c_n)$.
  Finally, $\bfc$ is a solution to the start subsystem $H^{(\bfalpha)}(\bfy,0)$.

  The point is that for each branch $\bfz(t)$ of $C$ near $t=0$, there is a weight $\bfalpha$ and positive integer $r$
  such that the vector $\bfc$ of lowest order coefficients of $\bfz(t)$ is a solution to $H^{(\bfalpha)}(\bfy,0)$.
  The discussion preceding the statement of the Polyhedral Homotopy Algorithm shows that $(\bfalpha,r)$ exposes a mixed lower facet $Q$ of
  $\widehat{P}$, and that $H^{(\bfalpha)}(\bfy,0)$ has $\vol(\pi(Q))$ solutions.
  Furthermore, each solution $\bfc$ to $H^{(\bfalpha)}(\bfy,0)$ may be developed into a power series solution $\bfy(s)$ to
  $H^{(\bfalpha)}(\bfy,s)$.
  Reversing the coordinate changes and reparameterization, this gives a solution $\bfz(t)$ to $H(\bfx,t)=0$ in $(\CC\{t\})^n$ and thus a
  branch  of the curve $C$ near $t=0$.

  Thus the homotopy paths for $H(\bfx,t)$ 
  correspond to the $\MV$ distinct branches $\bfz(t)$ of $C$ near $t=0$
  and the solutions computed in (3) give all $\MV$ solutions solutions to $F(\bfx)$.
\end{proof}

\begin{remark}\label{R:generalPHA}
  The assumption that $F(\bfx)$ is general in the Polyhedral Homotopy Algorithm ensures that $F(\bfx)$ has $\MV$ regular solutions
  {\sl and} that $H(\bfx,t)|_{t\in(0,1]}$ consists of $\MV$ smooth paths.
  Thus, to solve a given system $F(\bfx)=(f_1,\dotsc,f_n)$ where $f_i$ has support $\calA_i$, one first
  generates a general system $G=(g_1,\dotsc,g_n)$  where $g_i$ has support $\calA_i$.
  In practice, this is done by choosing random complex numbers as coefficients, and then with probability one, $G(\bfx)$ is general and
  satisfies the hypotheses of the Polyhedral Homotopy Algorithm.
  The Polyhedral Homotopy Algorithm is used to solve $G(\bfx)=0$, and then a parameter homotopy with start system $G$ and target system $F$
  is used to compute the solutions to $F(\bfx)=0$.\hfill$\diamond$
\end{remark}

\subsection{Solving binomial systems}\label{S:binomial}
To complete the discussion, we take a brief look at Step 6 in Algorithm \ref{polyhedral_algo}.
By construction, the subsystems $H^{(\bfalpha)}(\bfy,0)$ in Step 6 are binomial systems.
We explain how to solve such a system.

Suppose that $H(\bfy)$ is a system of binomials
\[
   H(\bfy) = \begin{bmatrix}
  p_1\bfy^{\bfu^{(1)}}-q_1\bfy^{\bfv^{(1)}}\ &\
    \dotsb\ &\
    p_n\bfy^{\bfu^{(n)}}-q_n\bfy^{\bfv^{(n)}}\ \end{bmatrix}
\]
where each $p_i,q_i\neq 0$ and $\bfu^{(1)}-\bfv^{(1)},\dotsc,\bfu^{(n)}-\bfv^{(n)}$ are linearly independent.
This is equivalent to the assertion that the Minkowski sum of the supports of the binomials is a parallelepiped $\pi(Q)$ of dimension $n$.
Then for $\bfy\in(\CC^\times)^n$, $H(\bfy)=0$ becomes
 \begin{equation}\label{Eq:binomial}
   \bfy^{\bfu^{(i)}-\bfv^{(i)}}\ =\ q_i/p_i\qquad\mbox{ for }i=1,\dotsc,n\,.
 \end{equation}

Let \defcolor{$A$} be the $n\times n$ matrix with rows $\bfu^{(1)}{-}\bfv^{(1)},\dotsc,\bfu^{(n)}{-}\bfv^{(n)}$.
Then $\det A = \vol(\pi(Q))$.
The \demph{Smith normal form} of $A$ consists of unimodular integer matrices~$X,Y$ (integer matrices with determinant $\pm 1$) and a diagonal matrix
$D=\mbox{diag}(d_1,\dotsc,d_n)$ such that $XAY=D$ and thus $|\det A|=|\det D| = d_1\cdot d_2\dotsb d_n$.
The unimodular matrices $X$ and $Y$ give coordinate changes on $(\CC^\times)^n$ which convert the system~\eqref{Eq:binomial} into a diagonal
system of the form
\[
   x_i^{d_i}\ =\ b_i\qquad\mbox{ for }i=1,\dotsc,n\,.
\]
All $d_1\dotsb d_n$ solutions may be found by inspection, and then the coordinate changes may be reversed to obtain all solutions to the
original system $H(\bfy)$.

%
\section{Numerical algebraic geometry}\label{sec:nag}

We have described methods to compute all isolated solutions to a system of polynomial equations.
\demph{Numerical algebraic geometry} uses this ability to compute zero-dimensional algebraic varieties to represent and manipulate
higher-dimensional algebraic varieties on a computer.
This is an essential component of numerical nonlinear algebra.
Besides expanding the reach of numerical methods, the geometric ideas behind numerical algebraic geometry have led to new
methods for solving systems of polynomial equations, including regeneration and monodromy.
While the term was coined in~\cite{Sommese96numericalalgebraic}, the fundamental ideas were developed in a series of papers
including~\cite{SOMMESE2000572,OriginalTraceTest}, and a more thorough treatment is
in~\cite[Part III]{Sommese:Wampler:2005}. 

\begin{example}\label{Ex:Reducible}
Consider the following square system of polynomials in the variables $x,y,z$:
 \begin{equation}\label{ex:nag}
   F(x,y,z)\ =\ \begin{bmatrix} f(x,y,z)g(x,y,z)(x-4)(x-6)\\ f(x,y,z)g(x,y,z)(y-3)(y-5)\\ f(x,y,z)(z-2)(z-5)  \end{bmatrix},
 \end{equation}
where
\[
f(x,y,z)\ =\ \tfrac{1}{40}(2xy-x^2) - z -1\qquad \text{and}\qquad
g(x,y,z)\ =\ x^4 - 4x^2 - y - 1\,.
\]
Figure~\ref{F:NID} shows the real part of the variety $V$ of $F(x,y,z)=0$, consisting of a quadric (degree~2) surface, two quartic (degree 4) curves
(at $z=2$ and $z=5$, respectively), and eight points.
The surface is in blue, the two curves in red, and the eight points in green.\hfill$\diamond$
\begin{figure}[htb]
  \centering
  \begin{picture}(237,165)(-10,-13)
    \put(0,0){\includegraphics[width = 200pt]{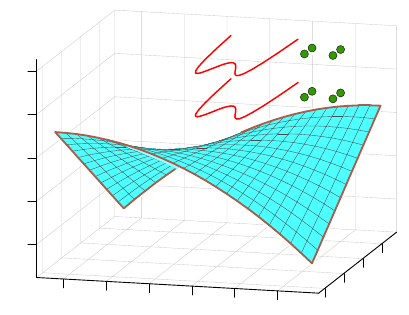}}

    \put(  5,116){\small$6$}
    \put(  5, 94){\small$3$}
    \put(  5, 72){\small$0$}
    \put( -3, 50){\small$-3$}
    \put( -3, 28){\small$-6$}
    \put(-10, 62){\small$z$}
    
    \put(16,   1){\small$-9$}    \put( 37.5,  -0.4){\small$-6$}    \put( 59,  -1.8){\small$-3$}
    \put(90,-3.2){\small$0$} \put(111.5,-4.6){\small$3$} \put(133,-6){\small$6$}
    \put(75,-13){\small$x$}

    \put(149, -4){\small$-10$} \put(164, 7){\small$-5$} \put(181, 17){\small$0$} \put(190, 24){\small$5$}
    \put(190,3){\small$y$}
 \end{picture}
  \caption{A reducible variety, defined implicitly by (\ref{ex:nag}).}
  \label{F:NID}
\end{figure}
\end{example}

Given any system $F(\bfx)$ defining a reducible variety $V$, implemented symbolic algorithms (primary decomposition and computing
radicals) will decompose the variety $V$ as follows.
These methods will compute a list $I_1,\dotsc,I_r$, where each $I_i$ is the ideal of an irreducible component $V_i$
of $V$.
Each ideal $I_i$ is represented by a Gr\"obner basis, which is a finite set of generators, and thus serves as a data structure encoding information about
$V_i$. 
For example, the dimension and degree  of a component $V_i$ may be computed from the data $I_i$.

In numerical algebraic geometry, the data structure to represent a positive-dimensional component of a variety is a witness
set, which we now describe.
Suppose that \defcolor{$F(\bfx)$} is a system of polynomials, and let \defcolor{$V$} be an irreducible component of the variety defined by
$F(\bfx)=0$. 
A \demph{witness~set}~\defcolor{$W$} for the component~$V$ is a triple $W=(F, L, L \cap V)$, where $L$  is a general (affine) linear
subspace complimentary to 
$V$ in that $\codim(L)=\dim(V)$ and $L \cap V$ consists of numerical approximations of the points in the intersection of $L$ and $V$.
Generality (see Section \ref{ssec:witness}) ensures that the \demph{linear slice} $L\cap V$ is transverse and consists of $\deg(V)$ points.
In practice, $L$ is represented by $\dim(V)$ randomly chosen polynomials of degree one.

The simple algorithm of \demph{membership testing} illustrates the utility of this data structure.
Given a witness set $W=(F, L, L \cap V)$ for an irreducible variety $V\subset\CC^n$ and a point $\bfx_0\in\CC^n$, we would
like to determine if $\bfx_0\in V$.
Evaluating $F$ at $\bfx_0$ only implies that $\bfx_0$ lies near the variety defined by $F$, not that it lies near the irreducible component $V$.
We instead choose a general linear subspace $L'$ with the same codimension as $L$, but for which $\bfx_0\in L'$ (that is, $L'$ is otherwise
general, given that $\bfx_0\in L'$).
Next, form the \demph{linear slice homotopy},
 \begin{equation}\label{Eq:moveWS}
   \defcolor{H(\bfx,t)}\ \vcentcolon=\  (F(\bfx),\, tL(\bfx) + (1-t) L'(\bfx))\,,
 \end{equation}
and use it to track the points of $L \cap V$ from $t=1$ to
$t=0$, obtaining the points of $L'\cap V$.
As the intersection of $V$ with the other components of the variety of $F$ has lower dimension than $V$,
the complement in $V$ of the other components
is path-connected, and thus $\bfx_0$ lies in $L'\cap V$ if and only if $\bfx_0\in V$.  

The core of this membership test reveals another algorithm involving witness sets.
Given a witness set $W=(F, L, L \cap V)$  and a general linear subspace $L'$  with the same codimension as $L$, 
the step of following the points of $L\cap V$ along the homotopy~\eqref{Eq:moveWS} to obtain the points $L'\cap V$ is called
\demph{moving a witness set}.
This is because $W'=(F, L', L'\cap V)$ is a new witness set for $V$.
This may also be considered to be an algorithm for sampling points of $V$.

The rest of this section discusses algorithms for computing a witness set and the corresponding numerical irreducible decomposition of a
variety $V$.
It concludes with a summary of regeneration and monodromy, two new methods for solving systems of polynomials.

\begin{remark}
  The set of points in the linear slice $L\cap V$ is considered a concrete version of Andr\'e Weil's generic
  points of a variety~\cite{Weil_FAG}. 
  We call it \demph{witness point set}.

  A witness point set is related to Chow groups from intersection theory~\cite{Fulton}.
  Indeed, a witness set for an irreducible variety $V$ may be interpreted as a specific way to represent the class of $V$ in the Chow
  ring of $\PP^n$.
  In \cite{Sottile_Witness_Sets_2020} this point of view was used to extend witness sets to represent subvarieties of varieties other than
  $\PP^n$. 
\end{remark}

\subsection{More on linear slices}\label{ssec:witness}

An irreducible algebraic subvariety $V$ of affine or projective space has two fundamental invariants---its \demph{dimension},
\defcolor{$\dim(V)$}, and its \demph{degree}, \defcolor{$\deg(V)$}.
The dimension of $V$ is the dimension of its (dense subset of) smooth points, as a complex manifold.
Equivalently, this is the dimension of its tangent space at any smooth point.

By Bertini's theorem~\cite[Thm.\ 2,\ \S6.2]{Shafarevich}, there is a dense Zariski-open subset of (affine) linear spaces $L$
of codimension $\dim(V)$ such that the linear slice $L\cap V$ is transverse.
Here, a codimension $d$ linear subspace is defined by $d$ independent degree one polynomials.
The degree of $V$ is the maximal number of points in such an intersection.
By Bertini's Theorem again, this maximum is achieved by linear spaces chosen from a Zariski open subset of
the Grassmannian of linear subspaces of codimension $\dim(V)$.

In practice, $L$ is represented by $\dim(V)$ 
random degree one polynomials (their coefficients are chosen randomly).
By the nature of Zariski open sets, for most reasonable probability distributions on these coefficients, a suitably
general $L$ will be found with probability one.

When the variety $V$ defined by the vanishing of $F(\bfx)$ is reducible and the maximum dimension of an irreducible component is $d$, then a
randomly-chosen linear subspace $L$ of codimension $d$ will meet each irreducible component $V'$ of $V$ of dimension $d$ in $\deg(V')$
points~$L\cap V'$, and it will not intersect any components of $V$ of dimension less than $d$.
If $V'$ is the unique component of dimension $d$, then $(F, L, L\cap V')$ is a witness set for $V'$.

\begin{example}\label{example_witness}
We continue Example~\ref{Ex:Reducible}.
To compute the linear slice $L\cap V$ with the line $L$ parameterized by $(t,-t-2,-3+t/4)$, 
we add to $F$ two degree one polynomials $x+y+2$ and $z+3-x/4$. The augmented system defines the intersection $L\cap V$. 
It has two solutions  $(10/3,  -16/3, -13/6)$ and $(-8, 6, -5)$.
%
%
The line $L$ is sufficiently general so that it only meets the
two-dimensional surface defined by $f(x,y,z)$, and neither of the 
curves nor any isolated points.
Figure~\ref{F:Surface_section} shows this configuration. \hfill$\diamond$
  \begin{figure}[htb]
  \centering  \begin{picture}(237,165)(-10,-15)
    \put(0,0){\includegraphics[width = 200pt]{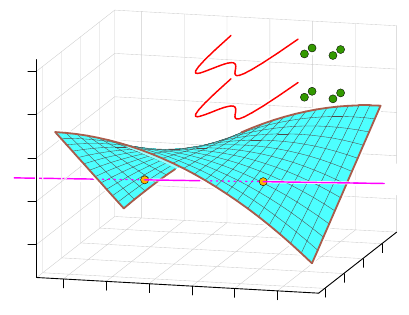}}

    \put(  5,116){\small$6$}
    \put(  5, 94){\small$3$}
    \put(  5, 72){\small$0$}
    \put( -3, 50){\small$-3$}
    \put( -3, 28){\small$-6$}
    \put(-10, 62){\small$z$}
    
    \put(16,   1){\small$-9$}    \put( 37.5,  -0.4){\small$-6$}    \put( 59,  -1.8){\small$-3$}
    \put(90,-3.2){\small$0$} \put(111.5,-4.6){\small$3$} \put(133,-6){\small$6$}
    \put(75,-13){\small$x$}

    \put(149, -4){\small$-10$} \put(164, 7){\small$-5$} \put(181, 17){\small$0$} \put(190, 24){\small$5$}
    \put(190,3){\small$y$}

    \put(190,62){\small$L$}
 \end{picture}
  \caption{Slice of $V$ by a line $L$.}
  \label{F:Surface_section}
  \end{figure}
\end{example}

While finding a witness point set for the top-dimensional component of $V$ in Example~\ref{example_witness} was straightforward, finding
witness point sets for the other components is not as simple.
To find points on curves, we intersect $V$ with the vertical plane \defcolor{$P$}
defined by $x+y-2=0$,  finding eight isolated solutions.
These come from the two curves of degree four, each contributing four points.
This number eight does not yet tell us that there are two curves, there may be a single curve of degree eight or some other
configuration.
Furthermore, the plane intersects the surface in a curve \defcolor{$C$}, and we may have found additional non-isolated points
on~$C$.
This is displayed in Figure \ref{F:Surface_section_2}.
  \begin{figure}[htb]
  \centering \begin{picture}(237,165)(-10,-13)
    \put(0,0){\includegraphics[width = 200pt]{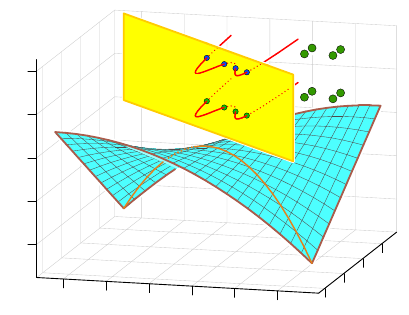}}

    \put(  5,116){\small$6$}
    \put(  5, 94){\small$3$}
    \put(  5, 72){\small$0$}
    \put( -3, 50){\small$-3$}
    \put( -3, 28){\small$-6$}
    \put(-10, 62){\small$z$}
    
    \put(16,   1){\small$-9$}    \put( 37.5,  -0.4){\small$-6$}    \put( 59,  -1.8){\small$-3$}
    \put(90,-3.2){\small$0$} \put(111.5,-4.6){\small$3$} \put(133,-6){\small$6$}
    \put(75,-13){\small$x$}

    \put(149, -4){\small$-10$} \put(164, 7){\small$-5$} \put(181, 17){\small$0$} \put(190, 24){\small$5$}
    \put(190,3){\small$y$}

    \put(73,117){\small$P$}

     \thicklines
     \put(180,64){{\color{white}\vector(-4,-1){40}}}
     \put(180,64.5){{\color{white}\line(-4,-1){38}}}
     \put(180,63.5){{\color{white}\line(-4,-1){38}}}

    \thinlines
    \put(183,61){\small$C$}\put(180,64){\vector(-4,-1){39}}
 \end{picture} \caption{The plane $P$ intersects $V$ in eight isolated points and a curve $C$.}
  \label{F:Surface_section_2}
  \end{figure}
Methods to remove points on higher-dimensional components and to determine which points lie on which components of the same
dimension are described in the next section.

%
\subsection{Numerical irreducible decomposition}\label{ssec:NID}

A system $F(\bfx)$ of polynomials  in $n$ variables defines the algebraic variety
$\defcolor{V}\vcentcolon=\{\bfx\in\mathbb C^n\mid F(\bfx)=0\}$.
Were $V$ irreducible, a witness set would be an acceptable representation for $V$.
An analog when $V$ is reducible is a \demph{numerical irreducible decomposition} of $V$.
This data structure for representing $V$ consists of a collection of witness sets $(F, L', L'\cap V')$, one for each
irreducible component $V'$ of $V$. 
We present a numerical irreducible decomposition for our running example:
\begin{gather*}
  \left(F, [x{+}y{+}2, z{+}3{-}\tfrac{x}{4}],  \{(\tfrac{10}{3}, -\tfrac{16}{3}, -\tfrac{13}{6}), (-8, 6, -5)\}\right)
  \\
  \left(F, [x{+}y{+}2],  \{
 (-2.06 , 0.06,2), (-0.40 , -1.60 , 2), (0.69 , -2.69 , 2), (1.76 , -3.76 , 2)\}\right)\,,
  \\
  \left(F, [x{+}y{+}2],  \{
  (-2.06 , 0.06,5), (-0.40 , -1.60 , 5), (0.69 , -2.69 , 5), (1.76 , -3.76 , 5)\}\right)\,,
  \\
  \left(F,[],\{(4,3,2)\}\right)\,,\  \left(F,[],\{(4,3,5)\}\right)\,,\  \left(F,[],\{(4,5,2)\}\right)\,,\  \left(F,[],\{(4,5,5)\}\right)\,,
 \\
  \left(F,[],\{(6,3,2)\}\right)\,,\  \left(F,[],\{(6,3,5)\}\right)\,,\  \left(F,[],\{(6,5,2)\}\right)\,,\  \left(F,[],\{(6,5,5)\}\right)\,.
\end{gather*}
We later present the \demph{Cascade Algorithm} (Algorithm~\ref{NID_algo}) to compute a numerical irreducible decomposition. 
We first explain its constituents.

\subsubsection{Witness point supersets}\label{SSS:WPS}
A starting point is to compute, for each $i$, a set of points $U_i$ in a linear slice $L\cap V$ of $V$ with a codimension $i$ linear space $L$,
where $U_i$ contains all witness point sets $L\cap V'$ for $V'$ an irreducible component of dimension $i$.
For this, let $\ell_1,\dotsc,\ell_{n-1}$ be randomly chosen (and hence independent) degree one polynomials.
For each $i$, let \defcolor{$L^i$} be defined by $\ell_1,\dotsc,\ell_i$, and let \defcolor{$F_i$} be a subsystem of $F$ consisting of $n-i$
randomly chosen linear combinations of elements of $F$.
Then  $(F_i,\ell_1,\dotsc,\ell_i)$ is a square subsystem of $(F,L^i)$, and we may use it to compute points \defcolor{$U_i$}
that lie in $L^i\cap V$, as explained in Section~\ref{SS:SquaringUp}.
In the literature, these sets $U_i$ are called witness point supersets.

By the generality of $\ell_1,\dotsc,\ell_{n-1}$, there will be no solutions to $(F_i,\ell_1,\dotsc,\ell_i)$ when $i$ exceeds the dimension
$d$ of $V$---yet another application of Bertini's theorem.
By the same generality, the set $U_i$ contains witness point sets for each irreducible component of $V$ of dimension $i$, and perhaps 
some points on irreducible components of $V$ of larger dimension.
The next two sections describe how to remove points of $U_i$ that lie on components of $V$ of dimension exceeding $i$, and then how to
decompose such an equidimensional slice into witness point sets for the irreducible components of $V$ of dimension $i$.

\subsubsection{Removing points on higher-dimensional components}\label{NID_step1}
Suppose that we have computed witness point supersets
$U_0,U_1,\dotsc,U_d$, where \defcolor{$d$} is the dimension of $V$ as in Section~\ref{SSS:WPS}. 
By the generality of $\ell_1,\dotsc,\ell_d$, $U_d$ is equal to the linear slice $L^d\cap V$, and thus is the union
of witness point sets for the irreducible components of $V$ of dimension $d$.
For each $i=0,\dotsc,d$, let \defcolor{$V_i$} be the union of all irreducible components of $V$ of dimension $i$.
Then $\defcolor{W_i}\vcentcolon= L^i\cap V_i\subset U_i$  consists of points in $U_i$ lying on some component of $V$ of
dimension $i$.
This union of the witness point sets of $i$-dimensional components of $V$ is an \demph{equidimensional slice}.
Also, note that $W_d=U_d$.

Since points of $U_i\smallsetminus W_i$ lie on $V_{i+1},\dotsc,V_d$, downward induction on $i$ and the membership test computes 
$U_i\smallsetminus W_i$ and thus $W_i$.
This uses the observation that the membership test, starting with $W_j=L^j\cap V_j$, may be used to determine if a point
$\bfx_0\in U_i$ lies on $V_j$, for any $j>i$. 
This is invoked in Step~\ref{Step_MT} in Algorithm~\ref{Alg:EDS} for computing such equidimensional slices.

\begin{algorithm}
\caption{Computing equidimensional slices\label{Alg:EDS}}
\SetAlgoLined

\KwIn{$F(\bfx)$, $\ell_1,\dotsc,\ell_d$, $U_1,\dotsc,U_d$ as above.}

\KwOut{Equidimensional slices $W_0,\dotsc,W_d$ of $V$.}
Set $W_d\vcentcolon=U_d$.

\For{ $i$ from $d-1$  down to $0$}{

  Set $W_i\vcentcolon=\{\}$.
  
  \For{each point $\bfx_0\in U_i$}{
   
    If $\bfx_0\not\in V_{i+1}\cup\dotsb\cup V_d$, then $W_i\vcentcolon= W_i\cup\{\bfx_0\}$.\label{Step_MT}
  }
  }

  Return $W_d,\dotsc,W_1,W_0$.
\end{algorithm}

\begin{remark}\label{Rem:alternative}
 An alternative to Algorithm~\ref{Alg:EDS} is a \demph{local dimension test}~\cite{BHPS_local_dim}, 
 which can
 determine if a point $\bfx_0\in U_i$ lies on a component of dimension exceeding $i$.\hfill
\end{remark}

\subsubsection{Decomposing equidimensional slices}
\label{NID_step2}

Suppose that we have the equidimensional slices $W_0,\dotsc,W_d$ of $V$, where $W_i=L^i\cap V_i$ for each $i$, as computed in
Algorithm~\ref{Alg:EDS}.
Fix $i$ and suppose that the irreducible decomposition of $V_i$ is
\[
V_i\ =\  X_1 \cup X_2 \cup \dotsb \cup X_r\,,
\]
so that $X_1,\dotsc, X_r$ are all of the irreducible components of $V$ of dimension $i$.
Then 
 \begin{equation}\label{Eq:WitnessSetPartition}
W_i\ =\ L^i\cap V_i\ =\ \left(L^i\cap X_1\right) \sqcup \left(L^i\cap X_2\right) \sqcup \dotsb \sqcup \left(L^i\cap X_r\right)\,.
 \end{equation}
This union is disjoint by the generality of $\ell_1,\dotsc,\ell_d$, as the intersection of $X_j\cap X_k$ with $j\neq k$
has dimension less than $i$.
Call~\eqref{Eq:WitnessSetPartition} the \demph{witness set partition} of the equidimensional slice $W_i$.
Each part $L^i\cap X_j$ is a witness point set for $X_j$.
Computing a witness set partition of $W_i$ is tantamount to computing a numerical irreducible decomposition of $V_i$.

Suppose that $H(\bfx,t)\vcentcolon=(F(\bfx), tL^i(\bfx)+(1{-}t)L'(\bfx))$ is a linear slice homotopy~\eqref{Eq:moveWS}.
As with moving a witness set,  if we track a point
$\bfx\in L^i\cap X_j$ to a point $\bfx'\in L'\cap V$, then all points of the homotopy path, including its endpoint $\bfx'$, lie on
$X_j$.

Suppose that we combine  linear slice homotopies together, moving points of $W_i=L^i\cap V_i$ to $L'\cap V_i$ on to $L''\cap V_i$,
and then back to $L^i\cap V_i$.
The three convex combinations,
 \[
 tL^i(\bfx)+(1-t)L'(\bfx)\,,\ \ tL'(\bfx)+(1-t)L''(\bfx)\,,\ \mbox{ and }\   tL''(\bfx)+(1-t)L^i(\bfx)\,,
 \]
for $t\in[0,1]$ together 
form a based loop in the space of codimension $i$ affine linear subspaces.
Tracking each $\bfx_0\in W_i$ along the three homotopies gives another point $\sigma(\bfx_0)\in W_i$.
This computes a \demph{monodromy permutation} \defcolor{$\sigma$} of $W_i$.
This has the property that the partition of~$W_i$ into the cycles of $\sigma$ refines the witness set
partition~\eqref{Eq:WitnessSetPartition}.

Following additional based loops may lead to other partitions  of $W_i$ into cycles of monodromy permutations.
The common coarsening of these monodromy cycle partitions is an \demph{empirical partition} of $W_i$.
Every empirical partition is a refinement of the witness set partition.
Since the smooth locus of $X_j\smallsetminus (\bigcup_{k\neq j}X_k)$ is path-connected,
the common coarsening of all empirical partitions is the witness set partition.
Thus computing monodromy permutations will eventually give the witness set partition.
%
%
%

The problem with this approach to numerical irreducible decomposition is that only when~$V_i$ is irreducible is there a stopping
criterion. 
Namely, if we discover an empirical partition consisting of a single part, then we conclude that $V_i$ is irreducible,
and $(F,L^i,W_i)$ is a numerical irreducible decomposition of $V_i$.
All other cases lack a stopping criterion.
That is, there is no proof that a witness set partition has been found.

A common heuristic for stopping 
is the trace test~\cite{OriginalTraceTest}.
To begin, form a linear slice homotopy~\eqref{Eq:moveWS} using a linear subspace $L'$ such that $L^i\cap L'$ has codimension $i{+}1$
in projective space.
Then the convex combination $tL^i(\bfx)+(1{-}t)L'(\bfx)$ forms a \demph{pencil} of linear spaces.
The \demph{trace test} follows from the observation that while each homotopy path $\bfx(t)$ for $t\in[0,1]$ tracked from a point
$\bfx\in W_i$ is nonlinear, if we sum over all points $\bfx\in L^i\cap X_j$ in a single part of the witness set partition, then that sum or
its average is an affine-linear function of the homotopy parameter $t$.
Given a subset $S\subset W_i$, the average of the points tracked from $S$ is the \demph{trace} of $S$.
The trace is an affine-linear function if and only if $S$ is the full witness point set~\cite[Theorem 15.5.1]{Sommese:Wampler:2005}. 

\begin{example}\label{Ex:Trace}
 Consider the folium of Descartes which is defined by $f=x^3+y^3-3xy$.
 A general line $\ell$ meets the folium in three points $W$ with the triple $(f,\ell,W)$ forming a witness set for
 the folium.
 Figure~\ref{F:folium} shows these witness sets on four parallel lines, which lie in a pencil.
\begin{figure}[htb]
\[
   \begin{picture}(232,92)(-30,0)
     \put(0,0){\includegraphics{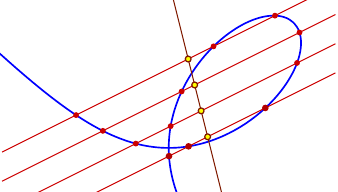}}
     \put(-30,72){$x^3+y^3-3xy$}
     \put(163,53){$\ell$}
     \put(125,7){collinear traces}
     \put(123,10){\vector(-1,0){16}}
    \end{picture}
\]
\caption{The trace test for the folium of Descartes.}
\label{F:folium}
\end{figure}
Note that the four traces are collinear.\hfill$\diamond$
\end{example}

The collinearity of traces may be seen as a consequence of Vi\`{e}ta's formula that the sum of
the roots of a monic polynomial of degree $\delta$ in $y$ is the coefficient of $-y^{\delta-1}$~\cite{traceTest}.

This gives the following heuristic for stopping 
for computing a numerical irreducible decomposition.
Given a part $S$ of an empirical partition of $W_i$, track all points of $S$ along a linear slice homotopy given by a
pencil containing $L^i$.
If the traces are collinear, then $S$ is a witness point set for some component of $V_i$.
Otherwise, either compute more monodromy permutations to coarsen the empirical partition or check the collinearity of the
trace for the union of $S$ with other  parts of the empirical partition.
This is called the \demph{trace test}.
The reason this is only a heuristic is that there is no way to certify collinearity of traces of approximate solutions.
In \cite{BrBu23} the authors develop a trace test specifically for systems of polynomials that are sparse in the sense of
Section~\ref{polyhedral-homotopy}.

\begin{example}\label{Ex:Trace_Decomposition}
Suppose that $V$ is the union of the ellipse $8(x+1)^2+ 3(2y+x+1)^2 = 8$ and the folium, as in
Figure~\ref{F:folium_ellipse}.
\begin{figure}[htb]
\[
   \begin{picture}(192,108)
     \put(0,0){\includegraphics{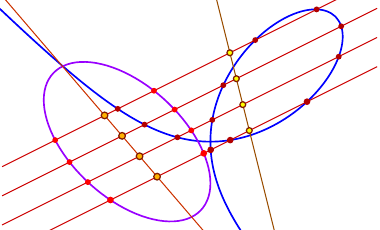}}
     \put(184,74){$\ell$}
    \end{picture}
\]
\caption{Numerical irreducible decomposition for the ellipse and folium.}
\label{F:folium_ellipse}
\end{figure}
A witness set for $V$ consists of the five points $W=V\cap\ell$.
Tracking points of $W$ as $\ell$ varies over several loops in the space of lines gives an empirical
partition of $W$ into two sets, of cardinalities two and three, respectively.
Applying the trace test to each subset verifies that each is a witness set of a component of $V$.\hfill$\diamond$
\end{example}


The methods described in this and the previous sections combine to give the \demph{Cascade Algorithm} for computing a numerical
irreducible decomposition.
This was introduced in~\cite{SOMMESE2000572} and is implemented in \texttt{PHCpack}~\cite{PHCpack}, \texttt{Bertini}~\cite{Bertini} and
\texttt{HomotopyContinuation.jl}~\cite{HC.jl}, and in \texttt{NAG4M2}~\cite{NumericalAlgebraicGeometryArticle} through its interfaces.
We present a simplified form in Algorithm~\ref{NID_algo}.

\begin{algorithm}
\caption{The Cascade Algorithm\label{NID_algo}}
\SetAlgoLined

\KwIn{A system $F(\bfx)$ of polynomials in $n$ variables defining a variety $V$ of dimension $d$.}

\KwOut{A numerical irreducible decomposition of $V$.}

\For{each dimension $i$ from $d$ down to $0$}{

Choose a codimension $i$ linear space $L^i$ and compute $L^i\cap V$, yielding points $U_i$.

Remove from $U_i$ any points lying on higher-dimensional components as in Algorithm~\ref{Alg:EDS}. Call the remaining points $W_i$.

Compute the witness set partition $W_i=S_1\sqcup\cdots\sqcup S_r$ using monodromy and the trace test as explained in~\Cref{NID_step2}.

Return $(F,L^i,S_j)$ for $j=1,\ldots,r$.  These are witness sets for the irreducible components of $V$ of dimension $i$.
}
\end{algorithm}

\subsection{Advanced methods for solving}\label{ssec:advanced}

The perspective afforded by numerical algebraic geometry and its tools---witness sets and monodromy---lead to new algorithms for solving
systems of equations.
We describe two such algorithms.
Regeneration~\cite{Regen} is a bootstrap method that constructs a numerical irreducible decomposition one equation at a time.
Monodromy solving~\cite{Monodromy_Solve} exploits that it is often easier to find a system of equations for which a given point is a
solution than to find a solution to a given system.

\subsubsection{Regeneration}\label{ssec:regen}

Let $F\vcentcolon=(f_1,\dotsc,f_m)$ be a system of polynomials in $n$ variables. 
Rather than solve all equations at once as in~\eqref{eq:AlgVariety}, we instead consider
the sequence of varieties
\[
\CC^n\ =\  V_0 \supset V_1 \supset V_2 \supset \dotsb \supset V_m = V\,,
\]
where $\defcolor{V_i}$
is defined by the first $i$ polynomials in $F$.
The approach of \demph{equation-by-equation solvers}~\cite{EqByEq,Regen,RegenCascade,DLR24} is to iteratively compute
$V_i$ given $V_{i-1}$ for each $i = 1,\dotsc,m$.

Let $X\subset V_{i-1}$ be an irreducible component of dimension $d$.
Then
\[
X\cap V_i\ =\ \{\bfx\in X\mid f_i(\bfx)=0\}\,.
\]
If $f_i$ vanishes identically on $X$, then $X$ is an irreducible component of $V_i$.
Otherwise $X\cap V_i$, if nonempty, is a union of irreducible components of $V_i$ of dimension $d{-}1$.
We explain how to obtain a numerical irreducible decomposition of $X\cap V_i$ given a witness set for $X$.

Let $((f_1,\dotsc, f_{i-1}), L^d, L^d\cap X)$ be a witness set for $X$.
By the generality of $L^d$, with probability one we may conclude that a general polynomial $f$ vanishes on $X$ only if $f$ vanishes 
at each point of $W\vcentcolon=L^d\cap X$.
\demph{Regeneration} is a method to use $W$ to compute a witness point superset for $X\cap V_i$.
Let $\ell_1,\dots,\ell_d$ be $d$ linear polynomials defining  $L^d$, and suppose that \defcolor{$\delta$} is the degree of $f_i$.
Form the convex combination $\defcolor{\ell(t)}\vcentcolon= t \ell_d + (1-t)\ell'$ of $\ell_d$ with a new general degree one polynomial,
\defcolor{$\ell'$}.
Use the straight-line homotopy
\[
(f_1,\dotsc, f_{i-1}\,,\ \ell(t)\,,\ \ell_1,\dotsc,\ell_{d-1})
\]
to move the witness point set $W_1=W=L^d\cap X$ at $t=1$ to witness point sets $W_2,\dotsc,W_\delta$ at distinct points
$t=t_2,\dotsc,t_\delta$, respectively.

Then the product $f\vcentcolon= \ell_d\cdot \ell(t_2)\dotsb \ell(t_\delta)$ has degree $\delta$ in $\bfx$ and we have
\[
\defcolor{U'}\ =\ W_1\cup W_2 \cup\dotsb\cup W_\delta
\ =\ L^{d-1}\cap \left(X  \cap\{\bfx \mid f(\bfx)=0\}\right)\,,
\]
where $L^{d-1}$ is defined by $\ell_1,\dotsc,\ell_{d-1}$. 
Use the straight-line homotopy
\[
(f_1,\dotsc,f_{i-1}\,,\ t f + (1-t) f_i\,,\ \ell_1,\dotsc,\ell_{d-1})
\]
to track the points of $U'$ at $t=1$ to the set $U$ at $t=0$.
Then
\[
   U\ =\ L^{d-1}\ \cap\  \bigl(X\ \cap\ V_i\bigr)
\]
is a witness point superset for $X\cap V_i$.
Finally, use monodromy and the trace test of Section~\ref{NID_step2} to decompose $U$ into witness point sets for the irreducible components of $X\cap V_i$.

As regeneration computes a numerical irreducible decomposition of the variety $V$ of $F$, it will also compute all isolated  solutions to
$F$.

\subsubsection{Solving by monodromy}\label{SS:Monodromy}

Suppose that we wish to solve a system $F=F(\bfx;\bfp)$ of polynomials that lies in a parameter space of polynomial systems as in
Section~\ref{SS:parameterContinuation}, and that evaluation at a general point $\bfx_0\in\CC^n$ gives $n$ independent linear equations in
the parameters $\bfp\in\CC^k$.
For example, $F(\bfx;\bfp)$ could be the family of all polynomial systems $f_1(\bfx),\dotsc,f_n(\bfx)$ where the degree of $f_i$ is $d_i$
and $\bfp\in\CC^k$ is the vector of coefficients.
More generally, each $f_i(\bfx)$  could be a sparse polynomial with support $\calA_i$.

Consider the incidence variety~\eqref{Eq:Incidence_I} with projections $\pi_1$ to $\CC^n$ and $\pi_2$ to $\CC^k$.
 \begin{equation}\label{Eq:Incidence_II}
   \raisebox{-27pt}{\begin{picture}(290,61)(3,0)
       \put(20,49){$Z\ \vcentcolon=\ \{ (\bfx, \bfp) \in \CC^n \times \CC^k \mid F(\bfx;\bfp) = 0\}\ \subseteq\ \CC^n \times \CC^k$}
       \put(21,45){\vector(-1,-4){8}} \put( 3,30){\small$\pi_1$} \put( 5,0){$\CC^n$}
       \put(26,45){\vector( 1,-4){8}} \put(34,30){\small$\pi_2$} \put(31,0){$\CC^k$}
   \end{picture}}
 \end{equation}
For any parameters $\bfp\in\CC^k$, $\pi_2^{-1}(\bfp)$ is the set of solutions $\bfx\in\CC^n$ to $F(\bfx;\bfp)=0$.
On the other hand, if we fix a general $\bfx_0\in\CC^n$, then $\pi_1^{-1}(\bfx_0)\subset\CC^k$ is defined by $n$ independent
linear equations on $\CC^k$, and is thus a linear subspace of dimension $k{-}n$.
(This implies that $Z$ is irreducible and has dimension $k$, which explains why the general fiber $\pi_2^{-1}(\bfp)$ is finite and
$\pi_2\colon Z\to\CC^k$ is a branched cover.)
Imposing $k{-}n$ additional general degree one equations on~$\pi_1^{-1}(\bfx_0)$ gives a single parameter value $\bfp_0\in\CC^k$ such that
$F(\bfx_0;\bfp_0)=0$, that is, a system of polynomials $F(\bfx;\bfp_0)$ 
for which $\bfx_0$ is a solution.

The underlying idea of monodromy solving~\cite{Monodromy_Solve} is to use monodromy to discover all solutions to
$F(\bfx;\bfp_0)=0$ and then use a parameter homotopy to find solutions to any desired system of polynomials in the family.
Similar to the description of monodromy in Section~\ref{NID_step2}, if we choose general points $\bfp_1,\bfp_2\in\CC^k$, we may form the
trio of parameter homotopies 
$F(\bfx; t\bfp_0+(1-t)\bfp_1)$, 
$F(\bfx; t\bfp_1+(1-t)\bfp_2)$, and
$F(\bfx; t\bfp_2+(1-t)\bfp_0)$.
For $t\in[0,1]$, these form a loop in the parameter space based at $\bfp_0$, and we may track the point $\bfx_0$ along this loop to obtain a
possibly new point $\bfx'\in\pi_2^{-1}(\bfp_0)$ so that $F(\bfx';\bfp_0)=0$.

More generally, given a subset $S\subset\pi_2^{-1}(\bfp_0)$ of computed points, we may track it along a possibly new loop in $\CC^k$ based
at $\bfp_0$ to obtain a subset $S'\subset\pi_2^{-1}(\bfp_0)$.
Thus we may discover additional solutions to $F(\bfx;\bfp_0)=0$.

When the number $N$ of solutions  to a general system in the family is known (e.g.,\ via Bernstein's bound for the sparse systems of
Section~\ref{S:Bernstein}), this method has a stopping criterion.
Otherwise, some heuristic may be used once sufficiently many solutions are known.
The technique of solving using monodromy was introduced in~\cite{Monodromy_Solve}, where a more complete description may be found.
It is implemented in the package \texttt{MonodromySolver} of \texttt{Macaulay2}~\cite{M2} and \texttt{HomotopyContinuation.jl}~\cite{HC.jl}
and widely used, in particular when it is not necessary to compute all solutions to a given system.

Suppose that we have a branched cover $\pi\colon Z\to Y$ with $Y$ rational (e.g.\ as in~\eqref{Eq:Incidence_II} where $Y=\CC^k$), and we know all
solutions $\pi^{-1}(\bfy_0)$ for a parameter value $\bfy_0$ not in the branch locus, $B$.
As in Section~\ref{NID_step2}, tracking all points in $\pi^{-1}(\bfy)$  as $\bfy$ varies along a loop in $Y\smallsetminus B$ based at
$\bfy_0$ gives a monodromy permutation $\sigma$ of $\pi^{-1}(\bfy_0)$, which we regard as an element of the symmetric group $S_N$, where
$N=|\pi^{-1}(\bfy_0)|$.
The set of all monodromy permutations forms the \demph{monodromy group} of the branched cover $Z$.

This is in fact a Galois group~\cite{Harris,J1870,GGEGA,Vakil}:
Let $\KK=\CC(Y)$ be the field of rational functions on the parameter space $Y$ and let $\LL=\CC(Z)$ be the function field of the incidence
variety $Z$.
As $\pi$ is dominant, we may regard $\KK$ as a subfield of $\LL$ via $\pi^{-1}$, and $\LL/\KK$ is a field extension of degree $N$.
The Galois group of the normal closure of $\LL/\KK$ is equal to the monodromy group of
$Z$, and we call it the Galois group of the branched cover $Z$.

There are several approaches to computing Galois groups using methods from numerical nonlinear algebra.
In~\cite{Galois_LS}, monodromy permutations were computed and used to show some Galois groups were the full symmetric group (see
Section~\ref{ssec:enum}).
Other approaches, including methods guaranteed to compute generators of Galois groups, were developed in~\cite{Ngalois}.
Yahl~\cite{Yahl_Fano} introduced a method to certify that a Galois group contains a simple transposition, using ideas from this
section and from Section~\ref{sec:cert}.

A Galois group that is imprimitive (preserves a partition of the solutions) is equivalent to the branched cover decomposing as a composition
of branched covers, and this may be exploited for solving (computing points in a fiber).
This is explained in~\cite{amendola2016solving,brysiewicz2021solving}.

\section{Certification}\label{sec:cert}

Let $F$ be a square system of polynomials and let $\bfz_0$ be a point presumed to be an approximation of a solution to $F$. 
We discuss methods that can give a computational proof
that Newton iterates starting from $\bfz_0$ converge to a nearby regular zero $\bfz$ of $F$.
Such methods \demph{certify} the numerical solution $\bfz_0$ to $F$.
Certification methods can also prove that two numerical solutions correspond to two distinct zeros, and are thus a
useful tool in both theoretical and applied problems in numerical nonlinear algebra.

There are  two main strategies to certify solutions to square polynomial systems,  \demph{Smale's $\alpha$-theory} and
\demph{Krawczyk's method}.
A difference  is that Smale's $\alpha$-theory uses exact arithmetic, while Krawczyk's method uses floating-point arithmetic, and is
  typically more efficient. 

\begin{remark}
  There are other approaches.
  In \cite{DUFF2022367} the authors develop methods to certify overdetermined systems which require global information.
  In~\cite{HHS}, overdetermined systems are reformulated as square systems to enable certification.\hfill$\diamond$
\end{remark}

\subsection{Smale's \texorpdfstring{$\alpha$}{alpha}-theory}

Smale's $\alpha$-theory certifies approximate zeros of a square system~$F$.
An approximate zero of $F$ is a data structure representing a solution to $F$.
We recall its definition from Section~\ref{sec:tracking}.

\begin{definition}\label{def_approx_zero} 
  Let $F(\bfx)$ be a square system of $n$ polynomials in $n$ variables.
  Writing $\defcolor{JF}\vcentcolon=\frac{\partial F}{\partial \bfx}$ for its Jacobian matrix, its
  Newton operator $N_F$~\eqref{Newton} is 
  $N_F(\bfx)\vcentcolon=\bfx - \left(JF(\bfx)\right)^{-1} F(\bfx)$.
  A point $\bfz_0 \in \CC^n$ is an \demph{approximate zero} of $F$ if there exists
  a regular zero $\bfz\in \CC^n$ of $F$ such that the sequence $\{ \bfz_k\mid k\geq 0\}$ of Newton iterates
  defined by  $\bfz_{k+1} = N_F(\bfz_k)$ for $k\geq 0$ converges quadratically to $\bfz$ in that 
  \[
  \| \bfz_{k+1} - \bfz\|\ \leq \   \frac{1}{2} \| \bfz_k-\bfz\|^2
  \qquad \forall k\,\geq\,0\,.
  \]
  We call $\bfz$ the \demph{associated zero} of $\bfz_0$.\hfill$\diamond$
\end{definition}

Smale's $\alpha$-theory certifies that a point $\bfx$ is an approximate zero using only local information encoded in 
two functions of $F$ and $\bfx$,
 \[
   \defcolor{\beta(F,\bfx)}\ \vcentcolon=\ \| JF(\bfx)^{-1}F(\bfx)\|
   \ \ \text{ and }\ \ 
   \defcolor{\gamma(F,\bfx)}\  \vcentcolon=\  
    \max_{k\geq 2}\Big\| \frac{1}{k!}\, JF(\bfx)^{-1} \mathrm D^kF(\bfx)\Big\|^\frac{1}{k-1}\,.
 \]
Here, $\beta$ is the size of a Newton step, 
$\mathrm D^kF(\bfx)$ is the symmetric tensor of derivatives of order $k$ at $\bfx$, and $JF(\bfx)^{-1} \mathrm D^kF(\bfx)$ is the
corresponding multilinear map $(\CC^n)^k\to \CC^n$.
%
%
The norm is the operator norm $\| A\| \vcentcolon= \max_{\| v \| = 1} \| A(v,\ldots,v)\|$.

Let $\defcolor{\alpha(F,\bfx)}\vcentcolon=\beta(F,\bfx) \cdot \gamma(F,\bfx)$ be the product of these two functions.
We state two results of Smale \cite[Theorem 4 and Remark 6 in Chapter 8]{BCSS}.

\begin{theorem}\label{alpha_theorem}
 Let $\bfx\in\CC^n$ and let $F$ be a system of $n$ polynomials in $n$ variables.
 \begin{enumerate}[label=(\alph*)]
 \item If $\alpha(F,\bfx)<\frac{13-3\sqrt{17}}{4}\approx 0.15767$, then $\bfx$ is an approximate zero of $F$
        whose associated zero $\bfz$ satisfies $\|\bfx-\bfz\|\leq 2\beta(F,\bfx)$.
  \item  \label{alpha_theorem_b}
      If $\bfx$ is an approximate zero of $F$ and $\bfy\in\CC^n$ satisfies $\Vert \bfx-\bfy\Vert < \tfrac{1}{20\,\gamma(F,\bfx)}$,
    then $\bfy$ is also an approximate zero of $F$ with the same associated zero as $\bfx$.
 \end{enumerate}
\end{theorem}

Shub and Smale \cite{SS99} derived an upper bound for $\gamma(F,\bfx)$ which can be computed using exact arithmetic, and thus one
may decide algorithmically if $\bfx$ is an approximate zero of $F$, using only data of $F$ and the point $\bfx$ itself.

The software \texttt{alphaCertified}~\cite{alpha_certify} uses this theorem in an algorithm.
An implementation  is publicly available for
download\footnote{\url{https://franksottile.github.io/research/stories/alphaCertified/index.html}}. 
If the polynomial system $F$ has only real coefficients, then \texttt{alphaCertified} can decide if an associated zero is real.
The idea is as follows.
We write $\overline{\bfx}$ for the complex conjugate of a point $\bfx\in\CC^n$.

Suppose that $\bfx\in \CC^n$ be an approximate zero of $F$ with associated zero $\bfz$.
Since the Newton operator has real coefficients, $N_F(\overline{\bfx}) =\overline{ N_F(\bfx)}$, we see that $\overline{\bfx}$ is an
approximate zero of $F$ with associated zero~$\overline{\bfz}$.
Consequently, if $\| \bfx-\overline{\bfx}\| < \tfrac{1}{20\,\gamma(F,\bfx)}$, then $\bfz=\overline{\bfz}$ by
\Cref{alpha_theorem}\ref{alpha_theorem_b}. 

\subsection{Krawczyk's method} 
Interval arithmetic certifies computations using floating-point arithmetic. 
\demph{Krawczyk's method} \cite{Krawczyk:1969} adapts Newton's method to interval arithmetic and can certify zeros of
analytic function $\CC^n\to\CC^n$.
This is explained in \cite{Rump83}.

Real interval arithmetic involves the set of compact real intervals,
\[
    \defcolor{\IR}\ \vcentcolon=\  \{[x,y]\mid x,y\in\mathbb R, x\leq y\}\,.
\]
For $X, Y \in \IR$ and $\circ \in \{ +,-, \cdot, \div \}$, we define
$\defcolor{X \circ Y} \vcentcolon= \{ x \circ y \,|\, x\in X,y\in Y\}$.
(For $\div$ we require that $0\not\in Y$.)
For intervals $I,J,K\in \IR$ we have $I\cdot (J + K) \subseteq I\cdot J + I\cdot K$, but the inclusion may be strict.
Indeed,
\begin{align*}
    [0,1] \cdot ( [-1,0] + [1,1] )\ &=\ [0,1]\cdot [0,1] \ =\ [0,1]\,\ \ \mbox{ but}\\
   [0,1] \cdot [-1,0] + [0,1] \cdot [1,1] \ &=\  [-1,0] + [0,1]\ =\ [-1,1]\,.
\end{align*}
Thus there is no distributive law in interval arithmetic.

\demph{Complex intervals} are rectangles in the complex plane of the form
\[
X+\sqrt{-1}Y\  =\ \{x+\sqrt{-1}y\mid x\in X,y\in Y\}\,, \quad
 \mbox{where}\ X,Y\in\IR\,.
\]
Let \defcolor{$\IC$} be the set of all complex intervals.
Writing $\frac{X}{Y}$ for $X\div Y$, 
we define arithmetic for complex intervals $I=X+\sqrt{-1}\ Y$ and $J=W+\sqrt{-1}Z$ as follows.
 \begin{alignat*}{2}
   I + J &\vcentcolon=\ (X + W) + \sqrt{-1}\ (Y+Z)
    \qquad I \cdot J &&\vcentcolon=\ (X\cdot W - Y \cdot Z) + \sqrt{-1}\ (X\cdot Z + Y\cdot W)\\ 
    I - J &\vcentcolon=\ (X - W) + \sqrt{-1}\ (Y-Z)\qquad 
    \;\, \frac{I}{J} &&\vcentcolon=\
      \frac{X \cdot W + Y\cdot Z}{W\cdot W + Z\cdot Z} + \sqrt{-1}\ \frac{Y \cdot W - X\cdot Z}{W\cdot W + Z\cdot Z}
 \end{alignat*}
As before, for $\tfrac{I}{J}$ we assume that $0\not\in (W\cdot W + Z\cdot Z)$.

 As with real intervals, there is no distributive law for complex intervals.
 Consequently, evaluating a polynomial at intervals is not well-defined.
 Evaluation at intervals is well-defined for expressions of a polynomial as a straight-line program,
 which is an evaluation of the polynomial via a sequence of arithmetic operations that does not involve distributivity.

\begin{example}\label{example_IA}
  Consider the polynomial $f(x,y,z)=x(y+z) = xy+xz$.
  These two expressions of the distributive law are different straight-line programs for $f$, and we have shown that they have distinct
  evaluations  on the triple $( [0,1], [-1,0], [1,1])$.\hfill$\diamond$
\end{example}

We sidestep this issue with the notion of an interval enclosure.

\begin{definition}
  Let $F$ be a system of $m$ polynomials in $n$ variables. We call a map
  \[
  \defcolor{\square F}\colon (\IC)^n \rightarrow (\IC)^m
  \]
  such that
  $\{F(\bfx) \mid \bfx \in \bfI\} \subseteq \square F(\bfI)$ for every $\bfI \in (\IC)^n$ an \demph{interval enclosure} of
  $F$.\hfill$\diamond$ 
\end{definition}
Let~$\square F$ be an interval enclosure of a square polynomial system $F$ and let $\square JF$ be an interval enclosure of its
Jacobian  map $JF\colon\CC^n\to\CC^{n\times n}$.  
Furthermore, let $\bfI\in (\IC)^n$, $\bfx \in \CC^n$, and let $Y \in \CC^{n\times n}$ be an invertible matrix.
The \demph{Krawczyk operator} these define is
\[\defcolor{K_{\bfx,Y}(\bfI)}  \vcentcolon=\
  \bfx - Y \cdot \square F(\bfx) + (\mathbf{1}_n - Y \cdot \square \mathrm{J}F(I))(\bfI-\bfx)\,,
\]
where $\mathbf{1}_n$ is the $n\times n$ identity matrix.
The norm of a matrix interval $A\in(\IC)^{n\times n}$ is
$\| A\|_\infty \vcentcolon= \max\limits_{B\in A} \max\limits_{\bfv\in\CC^n} {\| B\bfv\|_\infty} / {\| \bfv\|_\infty}$,
where $\Vert (v_1,\ldots,v_n) \Vert_\infty = \max_{1\leq i\leq n}\vert v_i\vert$ for $\bfv\in\CC^n$.

We state the main theorem underlying Krawczyk's method, which is proven in~\cite{Rump83}.

\begin{theorem}\label{main_theorem_IA}
  Let $F=(f_1,\ldots,f_n)$ be a system of $n$ polynomials in $n$ variables, $\bfI\in(\IC)^n$, $\bfx\in \bfI$,
  and let $Y\in\CC^{n\times n}$ be invertible.
\begin{enumerate}[label=(\alph*)]
\item \label{IA_a} If $K_{\bfx,Y}(\bfI) \subset \bfI$, there is a zero of $F$ in $\bfI$.
\item If $\sqrt{2} \, \| \mathbf{1}_n - Y\cdot  \square JF(\bfI) \|_\infty < 1$, then $F$ has a unique zero in $\bfI$.
\end{enumerate}
\end{theorem}

Several choices have to be made to implement Krawczyk's method.
For instance, we have to choose interval enclosures of both $F$ and its Jacobian $JF$.
\Cref{example_IA} shows that this is nontrivial as different straight-line programs for the same polynomial system can produce different
results in interval arithmetic.
Furthermore, choosing $\bfI$ in \Cref{main_theorem_IA} too small might cause the true zero not to lie in $\bfI$, while choosing $\bfI$ too
large can be an obstacle for the contraction property in Theorem~\ref{main_theorem_IA}\ref{IA_a}.
Heuristics are usually implemented to address these issues. 

The key idea for combining interval arithmetic with floating point operations is that one can round \demph{outwards}.
Rouding outwards, an interval computed in floating point arithmetic contains the  exact interval.
More specifically, suppose that $X, Y \in \IC$ are intervals and we want to compute $X \circ Y$, $\circ\in\{+,-,*,/\}$.
Let $u$ denote the machine precision.
On the computer we get the interval $\mathrm{fl}(X \circ Y) \vcentcolon= \{ (x  \circ  y)(1+\delta) \mid |\delta| \le u, x\in X,y\in Y \}$,
which \demph{contains} $X\circ Y$.
Consequently, it can happen that the Krawczyk operator $K_{\bfx,Y}$
is a contraction for an interval $I$, but that machine arithmetic cannot verify this, because $\mathrm{fl}(X  \circ  Y)$ is too large.

Krawczyk's method is implemented 
in \texttt{HomotopyContinuation.jl} \cite{HC.jl,BKT2020},
\texttt{Macaulay2}~\cite{M2}  package \texttt{NumericalCertification}~\cite{LeeM2,BLL:2019},
and in the commercial \texttt{MATLAB} package \texttt{INTLAB} \cite{Rump1999}.
Krawczyk's method can also certify the reality of a zero:
Assume that $F$ has real coefficients.
Suppose that we have found an interval $\bfI \in (\IC)^n$ and a matrix $Y\in \CC^{n\times n}$ such that
$K_{\bfx,Y}(\bfI) \subset \bfI$ and $\sqrt{2} \, \| \mathbf{1}_n - Y\cdot \square JF(\bfI) \|_\infty < 1$.
By Theorem~\ref{main_theorem_IA}, $F$ has a unique zero $\bfz$ in $\bfI$.
Since $\overline{\bfz}$ is also a zero of $F$, if $\{ \overline{\bfy} \mid \bfy\in K_{\bfx,Y}(\bfI) \} \subset \bfI$, then
$\bfz = \overline{\bfz}$.

%
%
%

\section{Applications}\label{sec:apps}

While we have largely discussed the theory and many aspects, methods, and some implementations of numerical nonlinear algebra, these were all
motivated by its applications to questions within and from outside of mathematics.
Many of these are well-known and may be found in other contributions in this volume.
We present three applications here, involving synchronization of oscillators, enumerative geometry, and computer vision.

\subsection{The Kuramoto model}\label{ssec:kuramoto}
In his 1673 treatise on pendulums and horology~\cite{Huygens1673Horologium}, Christiaan Huygens observed an ``odd kind of sympathy'' between
pendulum clocks, which was one of the earliest observations of synchronization among coupled oscillators.
Other examples range from pacemaker cells in the heart to the formation of circadian rhythm in the brain
to synchronized flashing of fireflies.
The Kuramoto model emerged from this study and has found many interesting applications.

A network of oscillators can be modeled as a swarm of points circling the origin which  pull on each other.
For weakly coupled and nearly identical oscillators, the natural separation of timescales \cite{Winfree1967Biological,Kuramoto1975Self} 
allows a simple description of the long-term behavior
in terms of phases of the oscillators.
Kuramoto singled out the simplest case governed by equations
\begin{equation}\label{equ: generalized kuramoto}
  \dot{\theta}_i\ =\ 
  \omega_i -
  \sum_{j \sim i} k_{ij} \sin(\theta_i - \theta_j)
  \quad\text{ for } i = 0,\ldots,n\,.
\end{equation}
Here, $\defcolor{\theta_0},\ldots,\defcolor{\theta_n} \in [0,2\pi)$ are the phases of the oscillators,
\defcolor{$\omega_i$} are their natural frequencies, $\defcolor{k_{ij}} = \defcolor{k_{ji}}$ are coupling coefficients,
and $\defcolor{j\sim i}$ is adjacency in the  graph $G$ underlying the network.
This is the Kuramoto model \cite{Kuramoto1975Self}.
It is simple enough to be analyzed yet it exhibits interesting emergent behaviors,
and has initiated  an active research field \cite{Strogatz2000From}.

One core problem that can be studied algebraically is  \demph{frequency synchronization}.
This occurs when the tendency of oscillators to relax to their limit cycles
and the influences of their neighbors reach equilibrium,
and the oscillators are all tuned to their mean frequency.
Such synchronized configurations correspond to
equilibria of \eqref{equ: generalized kuramoto},
which are solutions to a nonlinear system of equations.
Even though this system is derived from a simplification of the oscillator model,
its utility extends far beyond this narrow setting.
For example, in electrical engineering, 
it coincides with a special case of the power flow equations,
derived from laws of alternating current circuits~\cite{DorflerBullo2014Synchronization}.

The equilibrium equations become algebraic after a change of variables.
Numerical nonlinear algebra has been used to solve these and related families of equations
finding synchronization configurations that cannot be found by simulations or symbolic computation.
For example, the IEEE 14 bus system from electrical engineering is a well-studied test case,
yet its full set of solutions remained unknown until it was computed using
total degree and polyhedral homotopy methods by Mehta, {\it et al.}~\cite{MehtaNguyenTuritsyn2016Numerical},
using \texttt{Bertini} \cite{Bertini} and \texttt{HOM4PS-2.0} \cite{LeeLiTsai2008HOM4PS}.

For rank one coupling, Coss, {\it et al.}~\cite{CossHauensteinHongMolzahn2018Locating}
showed that the equilibrium equation of \eqref{equ: generalized kuramoto}
may be reformulated as a set of decoupled univariate radical equations,
which are easy to solve.

Determining the number of complex equilibria (solutions to the equilibrium equations~\eqref{equ: generalized kuramoto})
is another line of research that has used numerical nonlinear algebra.
In the 1980s, Baillieul and Byrnes \cite{BaillieulByrnes1982Geometric}
showed that a complete network of three oscillators has at most six complex equilibria, and all may be real.
For a complete network of four oscillators, they constructed 14 real equilibria.
There are 20  complex equilibria. 
In the 2010s,  Molzahn, {\it et al.}~\cite{MNMH} showed there could be 16 real equilibria
and in 2020, Lindberg {\it et al.}~\cite{LZBL} improved this to 18.
It remains unknown if all 20 complex equilibria can be real.

We have a more complete answer for the enumeration of complex equilibria.
Using the  bihomogeneous B\'ezout bound of an algebraic formulation of the equilibrium equations of \eqref{equ: generalized kuramoto},
Baillieul and Byrnes showed that a network of $n{+}1$ oscillators  has at most $\binom{2n}{n}$ complex equilibria \cite{BaillieulByrnes1982Geometric}.
This upper bound is  attained for generic parameters $\{\omega_i\}$ and $\{ k_{ij} \}$ whose network is a complete graph.

For sparse networks whose underlying graph is not complete, the bihomogeneous B\'ezout bound is not sharp,
as the equations are sparse in the sense of Section~\ref{polyhedral-homotopy}.
Bernstein's Theorem~\ref{thm: Bernstein} is used in \cite{MehtaNguyenTuritsyn2016Numerical} to give a bound that depends upon the underlying
graph.
This is elegantly expressed in terms of the normalized volumes of symmetric edge polytopes.
To a connected graph $G$ we associated its \demph{symmetric edge polytope}, which is defined by
\begin{equation}\label{equ: AP bound}
    \defcolor{\Delta_G}\  \vcentcolon=\  \conv \{ e_i - e_j \mid i\sim j \mbox{ in }G \}\,.
\end{equation}
For a network of $n{+}1$ oscillators this has dimension $n$.
Figure~\ref{F:SEP} shows symmetric edge polytopes for connected graphs on three vertices.
\begin{figure}[htb]
 \centering
  \begin{picture}(63,54)
    \put(6.5,0.5){\includegraphics{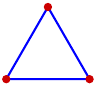}}
    \put(43,33){\small$K_3$}  
    \put(0,0){\small 1}    \put(26,43){\small 2}  \put(53,0){\small 3}
  \end{picture}
  \begin{picture}(63,54)
    \put(6.5,0.5){\includegraphics{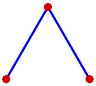}}
    \put(43,33){\small$T_3$}  
    \put(0,0){\small 1}    \put(26,43){\small 2}  \put(53,0){\small 3}
  \end{picture}
  \
  \begin{picture}(77,54)(-2,0)
    \put(0,0){\includegraphics{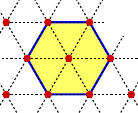}}
    \put(54,42){\small$\Delta_{K_3}$}
  \end{picture}
  \begin{picture}(77,54)(-2,0)
    \put(0,0){\includegraphics{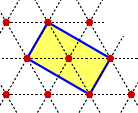}}
    \put(54,42){\small$\Delta_{T_3}$}
  \end{picture}
  \
  \begin{picture}(96,54)(-2,0)
      \put(0,0){\includegraphics{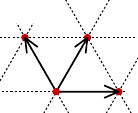}}
      \put(63,8){\small $e_1{-}e_2$}
      \put(49,40){\small $e_1{-}e_3$}
      \put(-2,46){\small $e_2{-}e_3$}
  \end{picture}

  \caption{Connected graphs on three vertices, their symmetric edge polytopes, and the coordinates.}
  \label{F:SEP}
\end{figure}
A result in~\cite{ChenKorchevskaiaLindberg2022Typical} is that for a connected graph $G$ and generic parameters,
there are exactly $n!\vol(\Delta_G)$ complex equilibria.
We may see the bound of six from \cite{BaillieulByrnes1982Geometric} for the network $K_3$ in Figure~\ref{F:SEP}; the hexagon $\Delta_{K_3}$
is composed of six primitive triangles.

This symmetric edge polytope $\Delta_G$ is quite natural and has been independently studied in geometry and number
theory~\cite{DAliDelucchiMichalek2022Many}. 
Table \ref{tab: kuramoto root count} shows examples of the numbers of complex equilibria obtained  through this connection.
\begin{table}[htb]
    \caption{Known results for the generic and maximum complex equilibria of \eqref{equ: generalized kuramoto}}
    \label{tab: kuramoto root count}
    \small
    \centering
    \begin{tabular}{ll}\toprule
        A tree  network with $n+1$ nodes \cite{ChenDavisMehta2018Counting} & $2^{n}$ 
        \\ \midrule
        A cycle network with $n+1$ nodes \cite{ChenDavisMehta2018Counting} & $ (n+1) \binom{n}{ \lfloor n / 2 \rfloor } $ 
        \\ \midrule
        Cycles of lengths
        $2m_1, \ldots, 2m_k$ joined along an edge \cite{DAliDelucchiMichalek2022Many} & 
        $\frac{1}{2^{k-1}} \prod_{i=1}^k m_i \binom{2m_i}{m_i}$ 
        \\ \midrule 
        Cycles of lengths $2m_1{+}1$ and $2m_2{+}1$ joined along an edge \cite{DAliDelucchiMichalek2022Many}  & 
        $(m_1 {+} m_2 {+} 2m_1m_2) \binom{2m_1}{m_1} \binom{2m_2}{m_2}$  
        \\ \midrule
        A wheel graph with $n+1$ nodes for odd $n$ \cite{DAliDelucchiMichalek2022Many}& 
        		$(1-\sqrt{3})^{n} + (1+\sqrt{3})^{n}$
        \\ \midrule
        A wheel graph with $n+1$ nodes for even $n$ \cite{DAliDelucchiMichalek2022Many}& 
        		$(1-\sqrt{3})^{n} + (1+\sqrt{3})^{n} - 2$
        \\
        \bottomrule
    \end{tabular}
\end{table}
Finding exact formulae for other families of networks
remains an active topic.
For trees and cycle networks,
it is possible for all complex equilibria to be real.
It is still unknown if the same holds for other families of networks.

%
%
\subsection{Numerical nonlinear algebra in enumerative geometry}\label{ssec:enum}

Paraphrasing Schubert \cite{Schubert1879}, enumerative geometry is the art of counting geometric figures satisfying conditions imposed by
other, fixed, geometric figures. 
Traditionally, these counting problems are solved by understanding the structure of the space of figures we are to count well enough to
construct their cohomology or Chow rings~\cite{Fulton}, where the computations are carried out.
Numerical nonlinear algebra allows us to actually compute the solutions to a given instance of an enumerative problem and then glean
information about the problem that is not attainable by other means.

While the polyhedral homotopy of Section~\ref{polyhedral-homotopy} based on Bernstein's Theorem may be viewed as a numerical homotopy method
to solve a class of enumerative problems, 
perhaps the first systematic foray in this direction was in~\cite{NSC_HSS} by Sturmfels and coauthors, who exploited structures in the
Grassmannian to give three homotopy methods for solving simple Schubert problems.
These are enumerative problems that ask for the $k$-planes in $\CC^n$ that meet a collection of linear subspaces non-trivially, such as
finding all (462) 3-planes in $\CC^7$ that meet twelve 4-planes~\cite{Sch1886b}.
Their number may be computed using Pieri's formula.
The Pieri homotopy algorithm from~\cite{NSC_HSS} was later used~\cite{Galois_LS} to study Galois groups in Schubert calculus.
This included showing that a particular Schubert problem with 17589 solutions had Galois group the full symmetric group $S_{17589}$.

One of the most famous and historically important enumerative problems is the problem of five conics:
How many plane conics are simultaneously tangent to five given plane conics?
This was posed by Steiner~\cite{St1848} who gave the answer $7776$.
Briefly, a conic $ax^2+bxy+cy^2+dxz+eyz+fz^2=0$ in $\PP^2$ is given by the point $[a,b,c,d,e,f]$ in $\PP^5$, and the condition to be tangent
to a given conic is a sextic in $a,b,\dotsc,f$.
By B\'ezout's Theorem, Steiner expected $6^5=7776$.
The only problem with this approach is that every ``doubled-line conic'', one of the form $(\alpha x+\beta y+\gamma z)^2$, is tangent to
every conic, and thus the B\'ezout count of $7776$ includes a contribution from the doubled-line conics.
Chasles~\cite{Ch1864} essentially introduced the Chow ring of smooth conics to give the correct answer of $3264$~\cite{Kl80}.

Fulton~\cite[p.~55]{Fu96} asked how many of the 3264 conics could be real, later determining that all can be real, but he did not publish
his argument.
His argument  involves deforming an asymmetric pentagonal arrangement of lines and points to prove
the {\it existence} of five real conics having all 3264 tangent conics real.
Ronga, Tognoli, and Vust~\cite{RTV} published a different proof of existence via a delicate local computation near a symmetric arrangement
that had 102 tangent conics, each of multiplicity 32.
Fulton's argument is sketched in~\cite[Ch.~9]{IHP} and Sturmfels with coauthors wrote a delightful article 
``3264 conics in a second''~\cite{3264} in which they give an explicit example of five real conics with 3264 real tangent conics, together
with a proof using certification as in Section~\ref{sec:cert} using numerical nonlinear algebra.
Figure~\ref{F:3264} shows a picture.
\begin{figure}[htb]
 \centering
 \includegraphics[height=220pt]{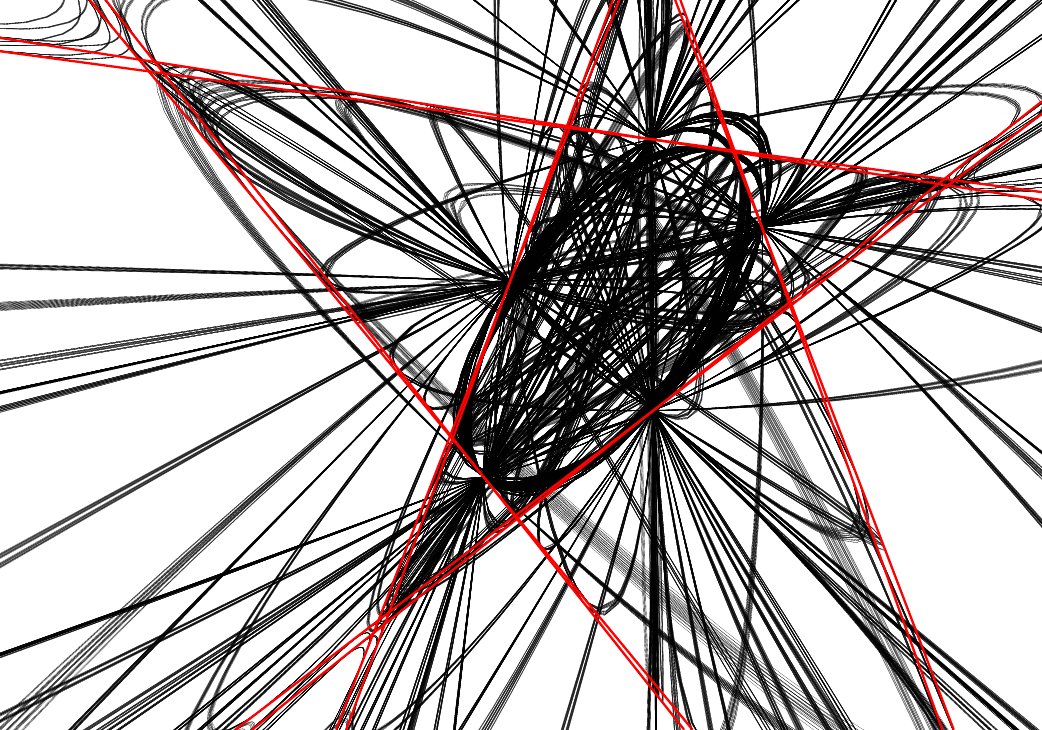}
 \caption{3264 real conics tangent to five (nearly degenerate) conics.}
 \label{F:3264}
\end{figure}

\subsection{Computer vision}\label{ssec:comp_vision}

Computer vision is a field of artificial intelligence that trains computers to interpret and understand the visual world.
Several types of problems in computer vision are amenable to algebraic computational methods.
We shall focus on one type---minimal problems---and one method---homotopy continuation.
Minimal problems are the backbone of the \demph{structure from motion} pipeline that is used for three-dimensional (3D) reconstruction in
applications from medical imaging to autonomous vehicles. 

The chapter ``Snapshot of Algebraic Vision'' in this volume~\cite{AlgVis} treats other types of problems and other algebraic methods,
including symbolic computation.  

All problems amenable to this algebraic analysis share a purely geometric problem at their core.
For computer vision, this often begins with basic projective geometry.
We consider the projective space $\PP^3$ as the 3D world, as it compactifies the Euclidean space $\RR^3$.
A mathematical model of a pin-hole camera \defcolor{$C$} is a projective linear map given by a matrix
\[
   C\ \vcentcolon=\  [R\mid t]\,, \quad R\in\RR^{3\times 3}\ \text{ and }\ t \in \RR^{3\times 1}\,,
\]
which captures the images of world points in the \demph{image plane} $\PP^2$.
A \demph{calibrated} (a.k.a. \demph{pinhole}) camera $C$ has $R \in \SO(3)$.
While this is formulated in the $\PP^3$ compactifying $\RR^3$, for computations we extend scalars to the complex
numbers. 

We may also interpret a calibrated camera as an element of the special Euclidean group $\SE(3)$ acting on $\RR^3$,
the rotation $R$ followed by the translation $t$.
It is convenient to operate in a fixed affine chart on $\PP^3$ and consider the (affine) camera plane as a plane of points with the third
coordinate equal to $1$ in $\RR^3$ (the local affine coordinates of the camera).
The image of a point is obtained by intersecting the image plane with the line going through the point and the center of the camera.
Figure~\ref{fig:5-points} illustrates this as well as the definition of \demph{depth}. 
\begin{figure}[htb]
\centering
  \begin{picture}(250,192)
    \put(0,0){\includegraphics[width=240pt]{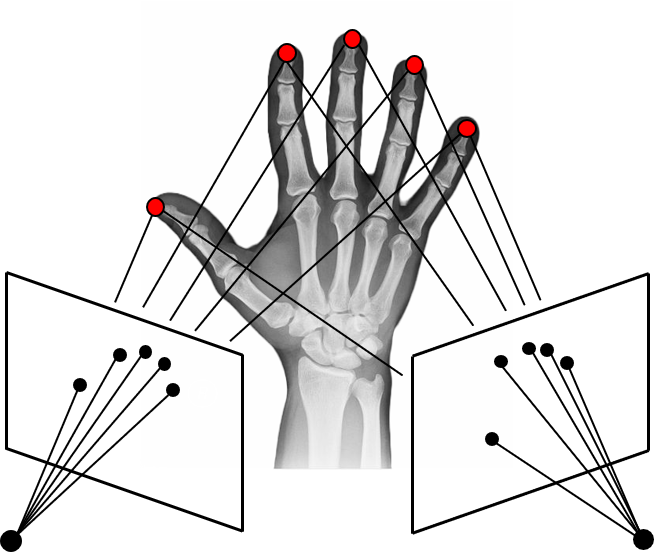}}
    \put(180,152){\small$\begin{pmatrix}\lambda x\\\lambda y\\ \lambda \end{pmatrix}$}
    \put( 64,48){\small$\begin{pmatrix}x\\y\\1\end{pmatrix}$}
  \end{picture}
  \caption{\demph{Five point problem}: given images of five points in two views, find the relative pose $[R\mid t]$ of the two cameras
      with  $t= (t_1,t_2,1)^T$.}
\label{fig:5-points}
\end{figure}
The relative positions of the two cameras is encoded by an element $[R\mid t]$ of the Euclidean special group $\SE(3)$.
The third coordinate of $t$ is set equal to 1 to remove a scaling ambiguity.
In the (3D) coordinate frame of the first camera the image of the tip of the little finger lies in the camera plane and the actual tip is
the point obtained by scaling this by the \demph{depth} $\lambda$.

\subsubsection{Minimal problems}
A reconstruction problem is \demph{minimal}  if it has a finite number of (complex) solutions for general values of the  parameters.
As in Sections~\ref{SS:parameterContinuation} and~\ref{SS:Monodromy}, a minimal problem gives rise to a
branched cover  $\pi\colon\calM\to\calP$, where the base space $\calP$ is called the \demph{problem space} and the total space $\calM$ (incidence
variety) is called the \demph{problem-solution} manifold.
The number of the solutions, which is the degree of the branched cover, is the \demph{degree} of the problem.

\begin{example}\label{ex:3-points}
A classical minimal problem is computing the calibrated camera from three points in space and their image projections~\cite{HZ-2003}. 
One classical formulation~\cite{Grunert-1841} of this problem is as a system of three quadratic polynomial equations
\begin{eqnarray*}
  \|X_1-X_2\|^2\ =\ \|\lambda_1 x_1-\lambda_2 x_2\|^2 \\
  \|X_2-X_3\|^2\ =\ \|\lambda_2 x_2-\lambda_3 x_3\|^2 \\
  \|X_3-X_1\|^2\ =\ \|\lambda_3 x_3-\lambda_1 x_1\|^2
\end{eqnarray*}
in three unknown \demph{depths} $\lambda_1, \lambda_2,\lambda_3$.
The parameters are the three ($i=1,2,3$) world points points $X_i \in \RR^3$ and the points $x_i = (x_{i1},x_{i2},1)^T \in \RR^3$
representing three images in $\PP^2$.  

This formulation implicitly uses that $R$ is orthogonal and preserves the norm.
Recovery of the camera $C=[R\mid t]$ from the depths is an exercise in linear algebra.

This problem has degree eight.
That is, for generic $X_i$ and $x_i$, $i=1,2,3$, the system has eight complex solutions~\cite{DBLP:conf/issac/FaugereMRD08}.
In practice, there are fewer solutions with positive depths $\lambda_i$.
This gives a branched cover of degree eight over the problem manifold $\calP\cong\CC^{15}$.\hfill$\diamond$
\end{example}

We formulate perhaps the most consequential of all 3D reconstruction problems.  

\begin{example}
\label{ex:5-points}
\demph{five point problem} of computing the relative pose of two calibrated cameras from five point correspondences in two images is
featured in Figure~\ref{fig:5-points}.

Consider (paired) images $x_i = (x_{i1},x_{i2},1)^T$, $y_i = (y_{i1},y_{i2},1)^T$ and depths $\lambda_i$ and $\mu_i$, where  $i=1,\dots,5$,
in the first and second cameras, respectively.
Write down all or sufficiently many of $\binom{5}{2}$ same-distance equations 
\[
   \|\lambda_{i} x_{i}-\lambda_{j} x_{j}\|^2\ =\ \|\mu_{i} y_{i}-\mu_{j} y_{j}\|^2, \quad(1 \leq i < j \leq 5)\,,
\]
between the image points, and one same-orientation equation
\begin{multline*}
\qquad\det[
  \lambda_{1} x_{1}-\lambda_{2} x_{2}
  \mid
  \lambda_{1} x_{1}-\lambda_{3} x_{3}
  \mid
  \lambda_{1} x_{1}-\lambda_{4} x_{4}
 ]\\
 =\ \det[
\mu_{1} y_{1}-\mu_{2} y_{2}
\mid
\mu_{1} y_{1}-\mu_{3} y_{3}
\mid
\mu_{1} y_{1}-\mu_{4} y_{4}
]\,.\qquad
\end{multline*}
These determinants are the signed volume of the same tetrahedron (formed by the world points $X_1,\dotsc,X_4$ in different coordinate
frames).
The equality of the volumes is implied by the same-distance equations but not the equality of signs.
Fix one depth, $\lambda_{1} = 1$, to fix the  ambiguity in scaling.
This gives a system of equations in the remaining nine unknown depths $\lambda$ and $\mu$. 

The solution space is the space of vectors of non-fixed depths $\calP=\RR^{9}$.
The projection from the incidence variety/problem-solution manifold to $\calP$ gives a covering of degree $20$.\hfill$\diamond$
\end{example}

As in Section~\ref{SS:Monodromy}, we may analyze the Galois group  of the branched cover.
Decomposing a monodromy group of a minimal problem as shown in \cite{duff2022galois} may lead to an easier 3D reconstruction.
The classical \demph{epipolar geometry} approach to the 
five point problem~\cite[Sect.~9]{HZ-2003} is realized in this way.
This gives a two-stage procedure for the relative pose recovery with the essential stage being a problem of degree $10$.    

\subsubsection{Engineering meets mathematics}
The 
five point problem of Example~\ref{ex:5-points} plays a practical role in solvers for geometric optimization problems in vision based on
RANSAC~\cite{Fischler-Bolles-ACM-1981,Raguram-USAC-PAMI-2013}. 
This problem has many practical solutions based on or inspired by Gr\"obner basis techniques that also use the epipolar geometry
formulation~\cite{Nister-5pt-PAMI-2004}. 

Recently, homotopy continuation has found practical use for minimal problems whose degrees are too high for efficient symbolic computation.
The first step toward practical fast computation was a special solver MINUS~\cite{MINUSwww} based on \texttt{Macaulay2}~\cite{M2} core C++ code for homotopy
continuation and optimized for performance on modern hardware.
Featured in~\cite{TRPLP}, it is deployed on two minimal problems of degrees 216 and 312 involving point as well as line correspondences.
This computes \emph{all} complex solutions and uses postprocessing to filter out irrelevant solutions. 

Unlike~\cite{TRPLP}, the work in~\cite{Hard-problems-CVPR2022} combines a neural network classifier with homotopy continuation.
Rather than compute all solutions and then pick a relevant one, it follows only one continuation path (over $\RR$).
That strategy finds the relevant solution with high probability in several practical scenarios.
It is comparable to state-of-art algorithms for the 
five point problem and exceeds the performance for a 
four point problem.
While matching four points in three calibrated views is not a minimal problem, there is a relaxation of degree 272 that is minimal, and the
solution of the relaxation may be verified by using the original (overdetermined) formulation.


\providecommand{\bysame}{\leavevmode\hbox to3em{\hrulefill}\thinspace}
\providecommand{\MR}{\relax\ifhmode\unskip\space\fi MR }
\providecommand{\MRhref}[2]{%
  \href{http://www.ams.org/mathscinet-getitem?mr=#1}{#2}
}
\providecommand{\href}[2]{#2}


\begin{thebibliography}{100}

\bibitem{amendola2016solving}
Carlos Am{\'e}ndola, Julia Lindberg, and Jose~Israel Rodriguez, \emph{Solving
  parameterized polynomial systems with decomposable projections}, 2016, {\tt
  arXiv:1612.08807}.

\bibitem{BaillieulByrnes1982Geometric}
John Baillieul and Christopher Byrnes, \emph{{Geometric critical point analysis
  of lossless power system models}}, IEEE Transactions on Circuits and Systems
  \textbf{29} (1982), no.~11, 724--737.

\bibitem{BHPS_local_dim}
Daniel~J. Bates, Jonathan~D. Hauenstein, Chris Peterson, and Andrew~J. Sommese,
  \emph{A numerical local dimension test for points on the solution set of a
  system of polynomial equations}, SIAM Journal on Numerical Analysis
  \textbf{47} (2009), no.~5, 3608--3623.

\bibitem{Bertini}
Daniel~J. Bates, Jonathan~D. Hauenstein, Andrew~J. Sommese, and Charles~W.
  Wampler, \emph{{Bertini: Software for Numerical Algebraic Geometry}},
  Available at bertini.nd.edu with permanent doi: dx.doi.org/10.7274/R0H41PB5.

\bibitem{BertiniBook}
\bysame, \emph{{Numerically solving polynomial systems with Bertini}}, SIAM,
  2013.

\bibitem{Bernshtein1975Number}
David Bernstein, \emph{The number of roots of a system of equations},
  Funkcional. Anal. i Prilo\v{z}en. \textbf{9} (1975), no.~3, 1--4.

\bibitem{BKK}
David Bernstein, Anatoliy Ku\v{s}nirenko, and Askold Hovanski\u{\i},
  \emph{Newton polyhedra}, Uspehi Mat. Nauk \textbf{31} (1976), no.~3(189),
  201--202.

\bibitem{BHORS}
Grigoriy Blekherman, Jonathan Hauenstein, John~Christian Ottem, Kristian
  Ranestad, and Bernd Sturmfels, \emph{Algebraic boundaries of {H}ilbert's
  {SOS} cones}, Compos. Math. \textbf{148} (2012), no.~6, 1717--1735.

\bibitem{BCSS}
Lenore Blum, Felipe Cucker, Mike Shub, and Stephen Smale, \emph{Complexity and
  real computation}, Springer-Verlag, New York, 1998. \MR{1479636}

\bibitem{BB23}
Viktoriia Borovik and Paul Breiding, \emph{A short proof for the parameter
  continuation theorem}, 2023, {\tt arXiv:2302.14697}.

\bibitem{EDDMV}
P.~Breiding, F.~Sottile, and J.~Woodcock, \emph{{Euclidean distance degree and
  mixed volume}}, Found. Comput. Math. \textbf{22} (2022), no.~6, 1743--1765.

\bibitem{BKT2020}
Paul Breiding, Kemal Rose, and Sascha Timme, \emph{Certifying zeros of
  polynomial systems using interval arithmetic}, ACM Trans. Math. Software
  \textbf{49} (2023), no.~1, Art. 11, 14.

\bibitem{3264}
Paul Breiding, Bernd Sturmfels, and Sascha Timme, \emph{3264 conics in a
  second}, Notices Amer. Math. Soc. \textbf{67} (2020), no.~1, 30--37.
  \MR{3970037}

\bibitem{BT_Intro}
Paul Breiding and Sascha Timme, \emph{An introduction to the numerical solution
  of polynomial systems},
  \url{https://www.juliahomotopycontinuation.org/guides/introduction/}.

\bibitem{HC.jl}
\bysame, \emph{Homotopy{C}ontinuation.jl: {A} {P}ackage for {H}omotopy
  {C}ontinuation in {J}ulia}, Mathematical Software -- ICMS 2018 (Cham),
  Springer International Publishing, 2018, pp.~458--465.

\bibitem{BrBu23}
Taylor Brysiewicz and Michael Burr, \emph{Sparse trace tests}, Math. Comp.
  \textbf{92} (2023), 2893--2922.

\bibitem{brysiewicz2021solving}
Taylor Brysiewicz, Jose~Israel Rodriguez, Frank Sottile, and Thomas Yahl,
  \emph{Solving decomposable sparse systems}, Numerical Algorithms \textbf{88}
  (2021), no.~1, 453--474.

\bibitem{BC2013}
Peter B{\"u}rgisser and Felipe Cucker, \emph{{Condition: The Geometry of
  Numerical Algorithms}}, Springer, Heidelberg, 2013. \MR{3098452}

\bibitem{BLL:2019}
Michael Burr, Kisun Lee, and Anton Leykin, \emph{{Effective Certification of
  Approximate Solutions to Systems of Equations Involving Analytic Functions}},
  Proceedings of the 2019 on International Symposium on Symbolic and Algebraic
  Computation (New York, NY, USA), ISSAC 2019, ACM, 2019, pp.~267--274.

\bibitem{Ch1864}
Michel Chasles, \emph{Construction des coniques qui satisfont {\`a} cinque
  conditions}, C. R. Acad. Sci. Paris \textbf{58} (1864), 297--308.

\bibitem{ChenDavisMehta2018Counting}
Tianran Chen, Robert Davis, and Dhagash Mehta, \emph{Counting equilibria of the
  {K}uramoto model using birationally invariant intersection index}, SIAM J.
  Appl. Algebra Geom. \textbf{2} (2018), no.~4, 489--507. \MR{3867607}

\bibitem{ChenKorchevskaiaLindberg2022Typical}
Tianran Chen, Evgeniia Korchevskaia, and Julia Lindberg, \emph{{On the typical
  and atypical solutions to the Kuramoto equations}}, 2022.

\bibitem{HOM4PSArticle}
Tianran Chen, Tsung-Lin Lee, and Tien-Yien Li, \emph{{Hom4PS-3: A Parallel
  Numerical Solver for Systems of Polynomial Equations Based on Polyhedral
  Homotopy Continuation Methods}}, Mathematical Software -- ICMS 2014 (Hoon
  Hong and Chee Yap, eds.), Springer Berlin Heidelberg, 2014, pp.~183--190.

\bibitem{CossHauensteinHongMolzahn2018Locating}
Owen Coss, Jonathan~D. Hauenstein, Hoon Hong, and Daniel~K. Molzahn,
  \emph{{Locating and Counting Equilibria of the Kuramoto Model with Rank-One
  Coupling}}, SIAM Journal on Applied Algebra and Geometry \textbf{2} (2018),
  no.~1, 45--71.

\bibitem{CLO}
David~A. Cox, John Little, and Donal O'Shea, \emph{Ideals, varieties, and
  algorithms}, fourth ed., Undergraduate Texts in Mathematics, Springer, Cham,
  2015, An introduction to computational algebraic geometry and commutative
  algebra.

\bibitem{DAliDelucchiMichalek2022Many}
Alessio D'Al\`\i, Emanuele Delucchi, and Mateusz Micha\l~ek, \emph{Many faces
  of symmetric edge polytopes}, Electron. J. Combin. \textbf{29} (2022), no.~3,
  Paper No. 3.24, 42.

\bibitem{Davidenko}
Dmytro Davidenko, \emph{On a new method of numerical solution of systems of
  nonlinear equations}, Doklady Akad. Nauk SSSR (N.S.) \textbf{88} (1953),
  601--602.

\bibitem{Davidenko_full}
\bysame, \emph{On approximate solution of systems of nonlinear equations},
  Ukrain. Mat. \v{Z}urnal \textbf{5} (1953), 196--206.

\bibitem{DS2000}
Jean-Pierre Dedieu and Michael Shub, \emph{Newton’s method for overdetermined
  systems of equations}, Math. Comp. \textbf{69} (2000), 1099--1115.

\bibitem{DHOST2016}
Jan Draisma, Emil. Horobe{\c t}, Giorgio Ottaviani, Bernd Sturmfels, and
  Rekha~R. Thomas, \emph{The {Euclidean} distance degree of an algebraic
  variety}, Foundations of Computational Mathematics \textbf{16} (2016), no.~1,
  99--149.

\bibitem{DUFF2022367}
Timothy Duff, Nickolas Hein, and Frank Sottile, \emph{Certification for
  polynomial systems via square subsystems}, J. Symbolic Comput. \textbf{109}
  (2022), 367--385. \MR{4316045}

\bibitem{Monodromy_Solve}
Timothy Duff, Cvetelina Hill, Anders Jensen, Kisun Lee, Anton Leykin, and Jeff
  Sommars, \emph{Solving polynomial systems via homotopy continuation and
  monodromy}, IMA J. Numer. Anal. \textbf{39} (2019), no.~3, 1421--1446.
  \MR{3984062}

\bibitem{duff2022galois}
Timothy Duff, Viktor Korotynskiy, Tomas Pajdla, and Margaret~H. Regan,
  \emph{Galois/{M}onodromy groups for decomposing minimal problems in 3{D}
  reconstruction}, SIAM J. Appl. Algebra Geom. \textbf{6} (2022), no.~4,
  740--772. \MR{4522866}

\bibitem{DLR24}
Timothy Duff, Anton Leykin, and Jose~Israel Rodriguez, \emph{$u$-generation:
  solving systems of polynomials equation-by-equation}, Numerical Algorithms
  \textbf{95} (2024), no.~2, 813--838.

\bibitem{DorflerBullo2014Synchronization}
Florian Dörfler and Francesco Bullo, \emph{{Synchronization in complex
  networks of phase oscillators: A survey}}, Automatica \textbf{50} (2014),
  no.~6, 1539--1564.

\bibitem{Ewald}
G\"unter Ewald, \emph{Combinatorial convexity and algebraic geometry}, Graduate
  Texts in Mathematics, vol. 168, Springer-Verlag, New York, 1996.

\bibitem{MINUSwww}
Ricardo Fabbri, \emph{{MINUS: MInimial problem NUmerical continuation Solver}},
  \url{https://github.com/rfabbri/minus}.

\bibitem{TRPLP}
Ricardo Fabbri, Timothy Duff, Hongyi Fan, Margaret~H. Regan, David da~Costa~de
  Pinho, Elias~P. Tsigaridas, Charles~W. Wampler, Jonathan~D. Hauenstein,
  Peter~J. Giblin, Benjamin~B. Kimia, Anton Leykin, and Tom{\'{a}}s Pajdla,
  \emph{{TRPLP} - {T}rifocal relative pose from lines at points}, 2020
  {IEEE/CVF} Conference on Computer Vision and Pattern Recognition, {CVPR}
  2020, Seattle, WA, USA, June 13-19, 2020, {IEEE}, 2020, pp.~12070--12080.

\bibitem{DBLP:conf/issac/FaugereMRD08}
Jean{-}Charles Faug{\`{e}}re, Guillaume Moroz, Fabrice Rouillier, and Mohab
  Safey~El Din, \emph{Classification of the perspective-three-point problem,
  discriminant variety and real solving polynomial systems of inequalities},
  Symbolic and Algebraic Computation, International Symposium, {ISSAC} 2008,
  Linz/Hagenberg, Austria, July 20-23, 2008, Proceedings (J.~Rafael Sendra and
  Laureano Gonz{\'{a}}lez{-}Vega, eds.), {ACM}, 2008, pp.~79--86.

\bibitem{Fischler-Bolles-ACM-1981}
Martin~A. Fischler and Robert.~C. Bolles, \emph{Random sample consensus: a
  paradigm for model fitting with applications to image analysis and automated
  cartography}, Commun. ACM \textbf{24} (1981), no.~6, 381--395.

\bibitem{Fu96}
William Fulton, \emph{Introduction to intersection theory in algebraic
  geometry}, CBMS Regional Conference Series in Mathematics, vol.~54, CBMS,
  Washington, DC, 1984.

\bibitem{Fulton}
\bysame, \emph{Intersection theory}, second ed., Ergebnisse der Mathematik und
  ihrer Grenzgebiete. 3. Folge., vol.~2, Springer-Verlag, Berlin, 1998.
  \MR{1644323}

\bibitem{GZ79}
C.~B. Garc\'{\i}a and Willard~I. Zangwill, \emph{Finding all solutions to
  polynomial systems and other systems of equations}, Math. Programming
  \textbf{16} (1979), no.~2, 159--176.

\bibitem{M2}
Daniel~R. Grayson and Michael~E. Stillman, \emph{Macaulay2, a software system
  for research in algebraic geometry}, Available at
  \url{http://www2.macaulay2.com}.

\bibitem{NAG4M2_PHCpack}
Elizabeth Gross, Sonja Petrovi\'{c}, and Jan Verschelde, \emph{Interfacing with
  {PHC}pack}, J. Softw. Algebra Geom. \textbf{5} (2013), 20--25.

\bibitem{Grunert-1841}
Johann~A. Grunert, \emph{Das pothenotische {P}roblem in erweiterter {G}estalt
  nebst \"uber seine {A}nwendungen in {G}eod\"asie}, In Grunerts Archiv f\"ur
  Mathematik und Physik (1841), 238--248.

\bibitem{Harris}
Joe Harris, \emph{Galois groups of enumerative problems}, Duke Math. Journal
  \textbf{46} (1979), no.~4, 685--724.

\bibitem{HZ-2003}
Richard Hartley and Andrew Zisserman, \emph{Multiple view geometry in computer
  vision}, 2nd ed., Cambridge, 2003.

\bibitem{HHS}
Jonathan~D. Hauenstein, Nickolas Hein, and Frank Sottile, \emph{A primal-dual
  formulation for certifiable computations in {S}chubert calculus}, Found.
  Comput. Math. \textbf{16} (2016), no.~4, 941--963.

\bibitem{Ngalois}
Jonathan~D. Hauenstein, Jose~I. Rodriguez, and Frank Sottile, \emph{Numerical
  computation of {G}alois groups}, Found. Comput. Math. \textbf{18} (2018),
  no.~4, 867--890.

\bibitem{Regen}
Jonathan~D. Hauenstein, Andrew~J. Sommese, and Charles~W. Wampler,
  \emph{Regeneration homotopies for solving systems of polynomials}, Math.
  Comp. \textbf{80} (2011), no.~273, 345--377.

\bibitem{RegenCascade}
\bysame, \emph{Regenerative cascade homotopies for solving polynomial systems},
  Appl. Math. Comput. \textbf{218} (2011), no.~4, 1240--1246.

\bibitem{alpha_certify}
Jonathan~D. Hauenstein and F.~Sottile, \emph{Algorithm 921: alpha{C}ertified:
  certifying solutions to polynomial systems}, ACM Trans. Math. Software
  \textbf{38} (2012), no.~4, Art. 28, 20. \MR{2972672}

\bibitem{Hard-problems-CVPR2022}
Petr Hruby, Timothy Duff, Anton Leykin, and Tomas Pajdla, \emph{Learning to
  solve hard minimal problems}, Proceedings of the IEEE/CVF Conference on
  Computer Vision and Pattern Recognition, 2022, pp.~5532--5542.

\bibitem{NSC_HSS}
Birkett Huber, Frank Sottile, and Bernd Sturmfels, \emph{Numerical {S}chubert
  calculus}, J. Symbolic Comput. \textbf{26} (1998), no.~6, 767--788.

\bibitem{HuSt95}
Birkett Huber and Bernd Sturmfels, \emph{A polyhedral method for solving sparse
  polynomial systems}, Math. Comp. \textbf{64} (1995), no.~212, 1541--1555.

\bibitem{Huygens1673Horologium}
Christiaan Huygens, \emph{Horologium oscillatorium: Sive de motu pendulorum ad
  horologia aptato demonstrationes geometricae}, F. Muguet, Paris, 1673.

\bibitem{J1870}
Camille Jordan, \emph{Trait\'e des {S}ubstitutions et des \'{E}quations
  alg\'ebriques}, Gauthier-Villars, Paris, 1870.

\bibitem{Khovanskii1978Newton}
Askold~G. Khovanskii, \emph{Newton polyhedra and the genus of complete
  intersections}, Functional Analysis and Its Applications \textbf{12} (1978),
  38--46.

\bibitem{AlgVis}
Joe Kileel and Kathl\'en Kohn, \emph{Snapshot of algebraic vision}, 2024.

\bibitem{Kl80}
Steven~L. Kleiman, \emph{Chasles's enumerative theory of conics: a historical
  introduction}, Studies in algebraic geometry, MAA Stud. Math., vol.~20, Math.
  Assoc. America, Washington, D.C., 1980, pp.~117--138.

\bibitem{Kushnirenko1976Polyedres}
Anatoliy~G. Kouchnirenko, \emph{Poly\`{e}dres de {N}ewton et nombres de
  {M}ilnor}, Inventiones Mathematicae \textbf{32} (1976), 1--31.

\bibitem{Krawczyk:1969}
Rudolf Krawczyk, \emph{Newton-{A}lgorithmen zur {B}estimmung von {N}ullstellen
  mit {F}ehlerschranken}, Computing \textbf{4} (1969), no.~3, 187--201.

\bibitem{Kuramoto1975Self}
Yoshiki Kuramoto, \emph{{International Symposium on Mathematical Problems in
  Theoretical Physics, January 23–29, 1975, Kyoto University, Kyoto/Japan}},
  Lecture Notes in Physics (2005), 420--422.

\bibitem{LeeM2}
Kisun Lee, \emph{Certifying approximate solutions to polynomial systems on
  {M}acaulay2}, ACM Communications in Computer Algebra \textbf{53} (2019),
  no.~2, 45--48.

\bibitem{LeeLiTsai2008HOM4PS}
Tsung-Lin Lee, Tien-Yien Li, and Chih-Hsiung Tsai, \emph{Hom4ps-2.0: A software
  package for solving polynomial systems by the polyhedral homotopy
  continuation method}, Computing (Vienna/New York) \textbf{83} (2008),
  109--133.

\bibitem{NumericalAlgebraicGeometryArticle}
Anton Leykin, \emph{Numerical algebraic geometry}, J. Softw. Algebra Geom.
  \textbf{3} (2011), 5--10. \MR{2881262}

\bibitem{traceTest}
Anton Leykin, Jose~Israel Rodriguez, and Frank Sottile, \emph{Trace test},
  Arnold Math. J. \textbf{4} (2018), no.~1, 113--125.

\bibitem{Galois_LS}
Anton Leykin and Frank Sottile, \emph{Galois groups of {S}chubert problems via
  homotopy computation}, Math. Comp. \textbf{78} (2009), no.~267, 1749--1765.

\bibitem{LindbergNicholsonRodriguez}
Julia Lindberg, Nathan Nicholson, Jose~I. Rodriguez, and Zinan Wang, \emph{The
  maximum likelihood degree of sparse polynomial systems}, SIAM J. Appl.
  Algebra Geom. \textbf{7} (2023), no.~1, 159--171.

\bibitem{LZBL}
Julia Lindberg, Alisha Zachariah, Nigel Boston, and Bernard Lesieutre,
  \emph{The distribution of the number of real solutions to the power flow
  equations}, IEEE Transactions on Power Systems (2022), 1--1.

\bibitem{MehtaNguyenTuritsyn2016Numerical}
Dhagash Mehta, Hung~Dinh Nguyen, and Konstantin Turitsyn, \emph{{Numerical
  polynomial homotopy continuation method to locate all the power flow
  solutions}}, IET Generation, Transmission \& Distribution \textbf{10} (2016),
  no.~12, 2972--2980.

\bibitem{NLA_book}
Mateusz Micha{\l}ek and Bernd Sturmfels, \emph{Invitation to nonlinear
  algebra}, Graduate Studies in Mathematics, vol. 211, American Mathematical
  Society, 2021.

\bibitem{MNMH}
Daniel Molzahn, Matthew Niemerg, Dhagash Mehta, and Jonathan Hauenstein,
  \emph{Investigating the maximum number of real solutions to the power flow
  equations: Analysis of lossless four-bus systems}, 2016, {\tt
  ArXiv:1603.05908}.

\bibitem{Morgan}
Alexander Morgan, \emph{Solving polynomial systems using continuation for
  engineering and scientific problems}, Classics in Applied Mathematics,
  vol.~57, Society for Industrial and Applied Mathematics (SIAM), Philadelphia,
  PA, 2009, reprint of the 1987 edition.

\bibitem{Morgan:1986}
Alexander~P. Morgan, \emph{A transformation to avoid solutions at infinity for
  polynomial systems}, Applied mathematics and computation \textbf{18} (1986),
  no.~1, 77--86.

\bibitem{MS1989}
Alexander~P. Morgan and Andrew~J. Sommese, \emph{Coefficient-parameter
  polynomial continuation}, Applied Mathematics and Computation \textbf{29}
  (1989), no.~2, 123--160.

\bibitem{Morgan:Sommese:Wampler:1992a}
Alexander~P. Morgan, Andrew~J. Sommese, and Charles~W. Wampler, \emph{Computing
  singular solutions to polynomial systems}, Advances in Applied Mathematics
  \textbf{13} (1992), no.~3, 305--327.

\bibitem{Nister-5pt-PAMI-2004}
David Nist\'er, \emph{An efficient solution to the five-point relative pose
  problem}, IEEE Transactions on Pattern Analysis and Machine Intelligence
  \textbf{26} (2004), no.~6, 756--770.

\bibitem{Raguram-USAC-PAMI-2013}
Rahul Raguram, Ondrej Chum, Marc Pollefeys, Jiri Matas, and Jan{-}Michael
  Frahm, \emph{{USAC:} {A} universal framework for random sample consensus},
  {IEEE} Transactions on Pattern Analysis Machine Intelligence \textbf{35}
  (2013), no.~8, 2022--2038.

\bibitem{RTV}
Felice Ronga, Alberto Tognoli, and Thierry Vust, \emph{The number of conics
  tangent to 5 given conics: the real case}, Rev. Mat. Univ. Complut. Madrid
  \textbf{10} (1997), 391--421.

\bibitem{Rump83}
Siegfried~M. Rump, \emph{Solving algebraic problems with high accuracy}, Proc.
  of the Symposium on A New Approach to Scientific Computation (USA), Academic
  Press Professional, Inc., 1983, pp.~51--120.

\bibitem{Rump1999}
\bysame, \emph{{INTLAB - INTerval LABoratory}}, Developments in Reliable
  Computing, Kluwer Academic Publishers, 1999, pp.~77--104.

\bibitem{Schubert1879}
Hermann Schubert, \emph{{K}alkul der abz{\"a}hlenden {G}eometrie},
  Springer-Verlag, 1879, reprinted with an introduction by S.~Kleiman, 1979.

\bibitem{Sch1886b}
\bysame, \emph{Los{\"u}ng des {C}harakteritiken-{P}roblems f{\"u}r lineare
  {R}{\"a}ume beliebiger {D}imension}, Mittheil. Math. Ges. Hamburg (1886),
  135--155, (dated 1885).

\bibitem{Shafarevich}
Igor~R. Shafarevich, \emph{Basic algebraic geometry. 1}, second ed.,
  Springer-Verlag, Berlin, 1994, Varieties in projective space.

\bibitem{SS99}
Michael Shub and Steve Smale, \emph{Complexity of {B}ezout's {T}heorem {I}:
  Geometric aspects}, Journal of the American Mathematical Society \textbf{6}
  (1993), no.~2, 459--501.

\bibitem{SOMMESE2000572}
Andrew~J. Sommese and Jan Verschelde, \emph{Numerical homotopies to compute
  generic points on positive dimensional algebraic sets}, Journal of Complexity
  \textbf{16} (2000), no.~3, 572--602.

\bibitem{Sommese96numericalalgebraic}
Andrew~J. Sommese, Jan Verschelde, and Charles~W. Wampler, \emph{Numerical
  algebraic geometry}, The Mathematics of Numerical Analysis, volume 32 of
  Lectures in Applied Mathematics, AMS, 1996, pp.~749--763.

\bibitem{OriginalTraceTest}
\bysame, \emph{Symmetric functions applied to decomposing solution sets of
  polynomial systems}, SIAM Journal on Numerical Analysis \textbf{40} (2002),
  no.~6, 2026--2046.

\bibitem{EqByEq}
\bysame, \emph{Solving polynomial systems equation by equation}, Algorithms in
  algebraic geometry, IMA Vol. Math. Appl., vol. 146, Springer, New York, 2008,
  pp.~133--152.

\bibitem{Sommese:Wampler:2005}
Andrew~J. Sommese and Charles~W. Wampler, \emph{The {N}umerical {S}olution of
  {S}ystems of {P}olynomials {A}rising in {E}ngineering and {S}cience}, World
  Scientific, 2005.

\bibitem{IHP}
Frank Sottile, \emph{Real solutions to equations from geometry}, University
  Lecture Series, vol.~57, American Mathematical Society, Providence, RI, 2011.

\bibitem{Sottile_Witness_Sets_2020}
\bysame, \emph{General witness sets for numerical algebraic geometry},
  I{SSAC}'20---{P}roceedings of the 45th {I}nternational {S}ymposium on
  {S}ymbolic and {A}lgebraic {C}omputation, ACM, New York, [2020] \copyright
  2020, pp.~418--425. \MR{4144068}

\bibitem{GGEGA}
Frank Sottile and Thomas Yahl, \emph{Galois groups in enumerative geometry and
  applications}, 2021, {\tt arXiv:2108.07905}.

\bibitem{St1848}
Jakob Steiner, \emph{Elementare {L}\"osung einer geometrischen {A}ufgabe, und
  \"uber einige damit in {B}eziehung stehende {E}igenschaften der
  {K}egelschnitte}, J. reine angew. Math. \textbf{37} (1848), 161--192.

\bibitem{Strogatz2000From}
Steven~H. Strogatz, \emph{{From Kuramoto to Crawford: exploring the onset of
  synchronization in populations of coupled oscillators}}, Physica D: Nonlinear
  Phenomena \textbf{143} (2000), no.~1-4, 1--20.

\bibitem{StMon}
Bernd Sturmfels, \emph{Polynomial equations and convex polytopes}, Amer. Math.
  Monthly \textbf{105} (1998), no.~10, 907--922.

\bibitem{Sturmfels2002}
\bysame, \emph{{Solving Systems of Polynomial Equations}}, CBMS Regional
  Conferences Series, no.~97, American Mathematical Society, 2002.

\bibitem{Sturmfels2005}
\bysame, \emph{What is \ldots a {G}r\"obner basis?}, Notices Amer. Math. Soc.
  \textbf{52} (2005), no.~10, 1199--1200.

\bibitem{Timme21}
Sascha Timme, \emph{Mixed precision path tracking for polynomial homotopy
  continuation}, Advances in Computational Mathematics \textbf{47} (2021),
  no.~5, 75.

\bibitem{Vakil}
R.~Vakil, \emph{Schubert induction}, Ann. of Math. (2) \textbf{164} (2006),
  no.~2, 489--512.

\bibitem{PHCpack}
Jan Verschelde, \emph{Algorithm 795: {PHC}pack: {A} {G}eneral-{P}urpose
  {S}olver for {P}olynomial {S}ystems by {H}omotopy {C}ontinuation}, ACM Trans.
  Math. Softw. \textbf{25} (1999), no.~2, 251--276.

\bibitem{VVC1994}
Jan Verschelde, Pierre Verlinden, and Ronald Cools, \emph{Homotopies exploiting
  {N}ewton polytopes for solving sparse polynomial systems}, SIAM Journal on
  Numerical Analysis \textbf{31} (1994), no.~3, 915--930.

\bibitem{VerscheldeYoffe2010Polynomial}
Jan Verschelde and Genady Yoffe, \emph{Polynomial homotopies on multicore
  workstations}, Proceedings of the 4th International Workshop on Parallel and
  Symbolic Computation (New York, NY, USA), PASCO '10, Association for
  Computing Machinery, 2010, p.~131–140.

\bibitem{Weil_FAG}
Andr\'{e} Weil, \emph{Foundations of algebraic geometry}, American Mathematical
  Society, Providence, R.I., 1962. \MR{0144898}

\bibitem{Winfree1967Biological}
Arthur~T. Winfree, \emph{Biological rhythms and the behavior of populations of
  coupled oscillators}, Journal of theoretical biology \textbf{16} (1967),
  no.~1, 15--42.

\bibitem{Yahl_Fano}
Thomas Yahl, \emph{Computing {G}alois groups of {F}ano problems}, J. Symbolic
  Comput. \textbf{119} (2023), 81--89.

\end{thebibliography}
\end{document}